\newtheorem{iThm}{Theorem}
\newtheorem{thm}{Theorem}[section]
\newtheorem{cor}[thm]{Corollary}
\newtheorem{lem}[thm]{Lemma}
\newtheorem{fact}[thm]{Fact}
\theoremstyle{definition}
\newtheorem{defn}[thm]{Definition}
\newtheorem*{convention}{Convention}
\theoremstyle{remark}
\newtheorem{rem}[thm]{Remark}
\newcommand{\norm}[1]{\left\Vert#1\right\Vert}
\newcommand{\op}[1]{\operatorname{#1}}
\renewcommand{\epsilon}{\varepsilon}
\renewcommand{\phi}{\varphi}
\begin{document}
\title{Model theoretic properties of dynamics on the Cantor set}%
\author[C. J. Eagle]{Christopher J. Eagle} 
\address[C. J. Eagle]{University of Victoria, Department of Mathematics and Statistics, PO BOX 1700 STN CSC, Victoria, British Columbia, Canada, V8W 2Y2}%
\email{eaglec@uvic.ca}

\author[A. Getz]{Alan Getz}
\address[A. Getz]{University of Victoria, Department of Mathematics and Statistics, PO BOX 1700 STN CSC, Victoria, British Columbia, Canada, V8W 2Y2}%
\email{agetz@uvic.ca}
\date{\today}%
\begin{abstract}
We examine topological dynamical systems on the Cantor set from the point of view of the continuous model theory of commutative C*-algebras.  After some general remarks we focus our attention on the generic homeomorphism of the Cantor set, as constructed by Akin, Glasner, and Weiss.  We show that this homeomorphism is the prime model of its theory.  We also show that the notion of ``generic" used by Akin, Glasner, and Weiss is distinct from the notion of ``generic" encountered in Fra\"iss\'e theory.
\end{abstract}
\maketitle
\section*{Introduction}\label{sec:Introduction}
Studying well-understood structures equipped with a new symbol for an automorphism is a common theme in contemporary model theory.  Especially in the context of fields, this study has led to striking applications of model-theoretic methods to other parts of mathematics, such as Hrushovski's celebrated proof of the Manin-Mumford conjecture in number theory \cite{Hrushovski}.

In this paper we take the theme of studying the model theory of an automorphism on a structure and apply it to the setting of topological spaces.  That is, given a topological space $X$ and a homeomorphism $S : X \to X$, we wish to study the pair $(X, S)$ from a model-theoretic point of view.  Applying model theory to topological spaces is not entirely straightforward, because topological spaces are not easily made into the kind of structures studied in model theory.  One approach, developed by Bankston, is to ``dualize" model-theoretic notions into topological notions, working without any syntax (see \cite{Bankston1987} and its successors).  Another method is to find a model-theoretic structure that can act as a replacement for the space $X$.  For example, if $X$ is compact and $0$-dimensional then, by Stone duality, the Boolean algebra of clopen subsets of $X$ can replace $X$ for most purposes.  However, Banaschewski \cite{Banaschewski1984} has shown that there is no class of classical model-theoretic structures, in any finite language, which is dual to the class of all compact spaces in the way that Boolean algebras are dual to compact $0$-dimensional spaces.  There has been some success using lattice bases as stand-ins for compact spaces (see, for instance, \cite{Dow2001} and \cite{Bartosova2011}), but the connection is imperfect, as a single space may have several different lattice bases that are not even elementarily equivalent.

The introduction of \emph{continuous first-order logic} by Ben Yaacov and Usvyatsov in \cite{BenYaacov2010} provides another method for approaching topological spaces.  By Gel'fand duality the class of compact Hausdorff spaces is dual to the class of commutative unital C*-algebras, and this latter class can be treated using the tools of continuous logic.  We assume that the reader is familiar with continuous logic for metric structures, as described in \cite{BenYaacov2008a}.  The structures we consider will all be expansions of C*-algebras.  Although C*-algebras do not directly fit into the framework of \cite{BenYaacov2008a} (because they are not bounded), a suitable way of using continuous logic for the model-theoretic study of C*-algebras was introduced by Farah, Hart, and Sherman in \cite{Farah2014a}.  The model theory of C*-algebras has since been extensively developed; though we will not make direct use of most of these recent results, we nevertheless recommend that the reader consult the survey \cite{FarahEtAl} for more information about the model theory of C*-algebras.  The use of C*-algebras to study the model theory of topological spaces has been previously explored in \cite{EagleVignati}, \cite{EagleThesis}, \cite{Eagle2016b}, and \cite{Goldbring2019}.

Broadly, we are interested in studying structures of the form $(C(X), \sigma)$, where $X$ is a compact Hausdorff space and $\sigma$ is the automorphism of $C(X)$ induced by a homeomorphism $S : X \to X$.  Following the viewpoint that the structure $\mathcal{M}$ should be well-understood before attempting to study it with an automorphism, we will primarily restrict ourselves to the case where $X$ is the Cantor set.  The model theory of $C(\op{Cantor~set})$ is quite well understood.  The theory of this structure can be explicitly axiomatized as the theory of commutative unital real rank $0$ C*-algebras without non-trivial minimal projections.  This theory is separably categorical, has quantifier elimination, and is the model completion of the theory of commutative unital C*-algebras (see \cite{EagleVignati} or \cite{EagleThesis}).  On the other hand, the study of Cantor set dynamical systems is a rich area, and such systems exhibit an interesting diversity of properties (see \cite{CMS} for a survey).

Section 1 contains preliminaries and fixes notation.  Starting with Section 2 we focus our attention on the case of $C(\op{Cantor~set})$.  In contrast to fields, where adding operators of various kinds often produces theories with a model companion (see \cite{Moosa}), we prove:

\begin{iThm}[Theorem \ref{thm:NoCompanion}]
The theory of $C(\op{Cantor~set})$ with an automorphism does not have a model companion.
\end{iThm}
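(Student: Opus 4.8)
The plan is to use the standard characterization that $T_\sigma$ has a model companion precisely when its class of existentially closed models is elementary, and to defeat this by showing that the existentially closed models of $T_\sigma$ are not closed under ultraproducts. Write $T = \op{Th}(C(\op{Cantor~set}))$, which by the results quoted above is model complete and has quantifier elimination, and let $T_\sigma$ be the expansion of $T$ by axioms asserting that $\sigma$ is a $*$-automorphism. Since $T_\sigma$ is inductive, every model embeds into an existentially closed one and the existentially closed models are closed under unions of chains, so the only possible obstruction to their forming an elementary class is failure of closure under ultraproducts. Equivalently, I would produce an embedding $(M,\sigma) \subseteq (N,\sigma)$ of existentially closed models of $T_\sigma$ together with an existential condition (in the language with $\sigma$) that is approximately satisfiable over $M$ in $N$ but not in $M$; this contradicts the model completeness that a companion $T^*$ would confer, since models of $T^*$ are exactly the existentially closed models and every embedding between them would be elementary.

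The engine of the argument is that $T$ has the strict order property. This is witnessed by the definable partial order on projections: for projections $p,q$ put $p \preceq q$ iff $\norm{p - pq} = 0$, equivalently $pq = p$, which in the Cantor picture is just inclusion of the corresponding clopen sets. Because the Cantor set has no isolated points, it carries a strictly increasing sequence of clopen sets $U_0 \subsetneq U_1 \subsetneq \cdots$, giving projections $p_0 \prec p_1 \prec \cdots$ of arbitrary, indeed infinite, chain length. Moreover $p - pq = p(1-q)$ is again a projection, so $\norm{p-pq} \in \{0,1\}$ and the formula $\norm{p-pq}$ separates the cases $p \preceq q$ and $p \npreceq q$ by the values $0$ and $1$; thus $T$ is unstable with the strict order property in the sense appropriate to continuous logic.

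I would then combine this order with the automorphism along the lines of Kikyo and Shelah's theorem that, for a model complete theory with the strict order property, the associated theory with an automorphism has no model companion. The idea is that $\sigma$ can be made to shift a projection up its chain, $p \prec \sigma(p) \prec \sigma^2(p) \prec \cdots$, and that existential closedness forces such $\sigma$-chains to be fillable to any prescribed finite depth inside a prescribed gap. In each existentially closed model the depth realized is some standard finite number, but in an ultraproduct this depth becomes nonstandard, so an ultraproduct of existentially closed models admits a proper extension solving an existential ``gap-filling'' condition that has no solution in the ultraproduct itself; hence the ultraproduct is not existentially closed and the class is not elementary. The main obstacle, and the step demanding the most care, is transporting this argument from classical to continuous logic: conditions here are closed and only approximately satisfiable, so the strict inequalities encoding the chain must be replaced by robust $\epsilon$-separated versions (exploiting the $0$/$1$ gap above), and one must verify that these approximate chain data are preserved by the continuous fundamental theorem of ultraproducts and survive the passage to the ultraproduct, so that the coded configuration genuinely has nonstandard length there. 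Granting this, no model companion of $T_\sigma$ can exist.
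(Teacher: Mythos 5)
Your outline is reasonable and your identification of the strict order property is correct: on projections $\norm{p-pq}$ is $\{0,1\}$-valued, $p\preceq q$ iff $pq=p$ is a definable partial order, and $C(2^{\mathbb{N}})$ carries infinite chains of projections, so $T_{Cantor}$ is a model complete theory with the strict order property. But the argument has a genuine gap at exactly the point you flag: everything rests on a continuous-logic analogue of the Kikyo--Shelah theorem (model complete plus strict order property implies no model companion for the theory with an automorphism), and you do not prove it --- you ``grant'' it. This is not a routine transfer. The discrete Kikyo--Shelah proof is a delicate compactness argument, not simply ``the depth of a $\sigma$-chain becomes nonstandard in an ultraproduct''; one must control which existential conditions over which parameters are realizable in extensions, and the model completeness of the base theory enters in a specific way. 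Redoing this for $[0,1]$-valued formulas, where conditions are closed and only approximately satisfiable, is essentially the entire content of the theorem, so as written the proposal reduces the statement to an unproved metatheorem. Two smaller issues: for existentially closed $(M,\sigma)\subseteq(N,\sigma)$ there cannot be an existential condition over $M$ satisfiable in $N$ but not approximately in $M$ --- that is the definition of existential closedness --- so what you mean is either an ultraproduct of e.c.\ models that fails to be e.c.\ or a non-elementary embedding between e.c.\ models, and these are two different sufficient refutations rather than equivalent reformulations; also, failure of closure under ultraproducts is not the ``only possible obstruction'' to the e.c.\ models forming an elementary class (closure under elementary equivalence is needed as well).

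The paper sidesteps all of this by exploiting precisely the feature you noticed: the order witnessing the strict order property is $\{0,1\}$-valued and lives entirely on the projections, i.e.\ on the Boolean algebra of clopen sets. Composing Gel'fand duality with Stone duality gives a functor to atomless Boolean algebras with an automorphism that preserves products and colimits, hence ultraproducts; combined with Keisler--Shelah in both its discrete and continuous forms, a hypothetical model companion of $T_{Cantor-A}$ would transport to a model companion of the theory of atomless Boolean algebras with an automorphism, which Kikyo and Shelah have already shown does not exist. In short, the efficient completion of your plan is not to reprove Kikyo--Shelah in continuous logic but to push the whole problem into the discrete Boolean-algebra setting where that theorem is available off the shelf; your observation about the projection order is the right starting point, but the route you chose requires establishing a new metatheorem rather than citing an existing one.
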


For the specific case of odometers on the Cantor set we prove that the theory is an extremely strong invariant:

\begin{iThm}[Theorem \ref{thm:Odometers}]
If two odometers on the Cantor set are elementarily equivalent then they are topologically conjugate.
\end{iThm}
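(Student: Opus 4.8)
The plan is to recover the topological-conjugacy invariant of an odometer — its supernatural number — from the continuous theory of $(C(X),\sigma)$. Recall that the odometer determined by a sequence $(n_k)$ with each $n_k\ge 2$ is the $+1$ map on $\varprojlim \mathbb{Z}/m_k\mathbb{Z}$, where $m_k=n_1\cdots n_k$, and that two odometers are topologically conjugate if and only if they have the same supernatural number $N=\prod_p p^{\alpha_p}$, where $\alpha_p=\sup_k v_p(m_k)$ (see \cite{CMS}). Since $N$ is determined by its set of finite divisors $\{m\in\mathbb{N}:m\mid N\}$, it suffices to show that for each fixed $m$ the statement $m\mid N$ is captured by a single continuous sentence in the language of $(C(X),\sigma)$, so that its truth is preserved by elementary equivalence.

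First I would give the algebraic reformulation of divisibility: $m\mid N$ if and only if there is a projection $p\in C(X)$ (equivalently, a clopen $U\subseteq X$) with $p,\sigma(p),\dots,\sigma^{m-1}(p)$ pairwise orthogonal and $\sum_{i=0}^{m-1}\sigma^i(p)=1$; note that this last equation automatically forces $\sigma^m(p)=p$ (apply $\sigma$ and subtract). The forward direction is easy: if $m\mid m_k$, group the $m_k$ level-$k$ cylinders by their residue modulo $m$ and let $U$ be the union of those in residue class $0$; since $S$ shifts the residue by $1$, the sets $U,SU,\dots,S^{m-1}U$ are exactly the $m$ residue classes and partition $X$. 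Conversely, any such clopen $U$ is a union of level-$k$ cylinders for some $k$, hence corresponds to $A\subseteq\mathbb{Z}/m_k\mathbb{Z}$ with $A,A+1,\dots,A+(m-1)$ partitioning $\mathbb{Z}/m_k\mathbb{Z}$; counting gives $m\cdot\abs{A}=m_k$, so $m\mid m_k$ and therefore $m\mid N$.

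Next I would encode this existence statement as the continuous sentence
$$\phi_m \;=\; \inf_{\norm{x}\le 1}\ \max\Bigl(\ \norm{x-x^*},\ \norm{x^2-x},\ \Bigl\|\textstyle\sum_{i=0}^{m-1}\sigma^i(x)-1\Bigr\|,\ \max_{0\le i<j\le m-1}\norm{\sigma^i(x)\,\sigma^j(x)}\ \Bigr),$$
and show that $\phi_m=0$ if and only if $m\mid N$. If $m\mid N$ the projection produced above witnesses $\phi_m=0$. The content is the converse, and the place where the totally disconnected structure is essential: an approximate solution must be rounded to an exact one. The plan is that if $x$ is self-adjoint with $\norm{x^2-x}$ small then its spectrum lies in a small neighbourhood of $\{0,1\}$, so (as $X$ is the Cantor set) the set $U=\{x>1/2\}$ is clopen and $\norm{x-\chi_U}$ is small; the orthogonality and summation defects for $x$ then transfer to $\chi_U$ with a controlled loss. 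But for the projection $\chi_U$ the quantities $\sigma^i(\chi_U)\,\sigma^j(\chi_U)$ and $\sum_{i=0}^{m-1}\sigma^i(\chi_U)$ take values in $\{0,1\}$ and in $\{0,1,\dots,m\}$ respectively, so once the defect drops below $1/2$ they must equal $0$ and $1$ exactly. Thus a sufficiently good approximate solution yields an exact clopen $m$-cyclic partition, which by the previous paragraph forces $m\mid N$; equivalently, $\phi_m$ is bounded below by a positive constant whenever $m\nmid N$.

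Finally, since $\phi_m$ is a sentence, elementarily equivalent odometers assign it the same value, so for every $m$ we have $m\mid N_1 \iff \phi_m=0 \iff m\mid N_2$. The two supernatural numbers then have the same finite divisors, whence $N_1=N_2$, and by the classification the odometers are topologically conjugate. I expect the rounding step to be the only delicate point; everything else is either the classical classification of odometers or the routine verification that $\phi_m$ is well formed in the Farah–Hart–Sherman formalism, the infimum being taken over the operator-norm unit ball, which is a legitimate domain of quantification.
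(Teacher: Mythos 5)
Your proposal is correct and takes essentially the same route as the paper: both arguments hinge on the fact that the existence of an $m$-cyclic clopen partition (equivalently, a projection $p$ with $p,\sigma(p),\dots,\sigma^{m-1}(p)$ pairwise orthogonal and summing to $1$) is expressible by a sentence, holds in an odometer exactly when $m$ divides some partial product, and hence is an elementary invariant that pins down the supernatural number and, with it, the conjugacy class. The only real difference is one of care rather than substance: you spell out the rounding of approximate $\inf$-witnesses to exact projections, a point the paper's proof leaves implicit.
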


However, this does not give rise to a separably categorical theory of Cantor dynamical systems, because we show that being an odometer is not an axiomatizable property.

Kechris and Rosendal \cite{KechrisRosendal} showed that there is a unique dense $G_\delta$ conjugacy class in the group of homeomorphisms of the Cantor set, and a concrete description of the homeomorphisms in this class was given by Akin, Glasner, and Weiss \cite{AGW}. In Section 3 we study the model theoretic properties of these \emph{generic homeomorphisms}.  The main results are:

\begin{iThm}[Theorems \ref{thm:NonaxiomatizableLifting}, \ref{thm:OmittingTypes}, \ref{thm:Prime}]
Being a generic homeomorphism is not axiomatizable, but is expressible as an omitting types property.  If $S$ is the generic homeomorphism of the Cantor set, then $(C(\op{Cantor~set}), S)$ is the prime model of its theory.
\end{iThm}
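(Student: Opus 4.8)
The plan is to prove the three assertions together, with the omitting-types characterization of Theorem~\ref{thm:OmittingTypes} as the central tool from which the other two statements follow. First I would translate the Akin--Glasner--Weiss topological description of the generic homeomorphism into the language of the pair $(C(\op{Cantor~set}), \sigma)$, where $\sigma(f) = f \circ S^{-1}$. Under Gel'fand duality, clopen subsets of the Cantor set correspond to projections of $C(\op{Cantor~set})$, and a periodic orbit of period $n$ corresponds to a nonzero projection $q$ with $q, \sigma(q), \dots, \sigma^{n-1}(q)$ pairwise orthogonal and $\sigma^n(q) = q$. The AGW genericity condition says, roughly, that periodic behaviour is dense and cofinal: every nonzero invariant clopen piece contains such finite $\sigma$-towers at arbitrarily fine scales. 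I would encode the \emph{failure} of this condition as a single type $p(x)$ whose conditions force $x$ to be a nonzero $\sigma$-invariant projection and further assert that $x$ contains no nonzero period-$n$ tower, for each $n \geq 1$. Each individual clause is expressible by a condition in continuous logic, the whole type $p$ is non-principal, and I would show that a separable dynamical system is a generic homeomorphism precisely when it omits $p$ (together with the already-known axioms for $C(\op{Cantor~set})$). The two directions, that omitting $p$ yields the AGW condition and that realizing $p$ produces a genuine non-generic homeomorphism, are where the dynamics must be matched carefully to the type.

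Granting this characterization, Theorem~\ref{thm:NonaxiomatizableLifting} follows from the fact that an axiomatizable (elementary) class is closed under elementary equivalence. I would take the generic system $(C(\op{Cantor~set}), S)$, which omits $p$, and realize $p$ in an elementary extension; since $p$ is a consistent non-principal type of $T = \op{Th}(C(\op{Cantor~set}), S)$, this is possible. By the continuous downward L\"owenheim--Skolem theorem there is a separable model $N \models T$ realizing $p$. Because the theory of $C(\op{Cantor~set})$ is separably categorical (as recorded in the Introduction), the C*-reduct of $N$ is isomorphic to $C(\op{Cantor~set})$, so $N \cong (C(\op{Cantor~set}), R)$ for some homeomorphism $R$ of the Cantor set. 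Then $R$ is elementarily equivalent to the generic $S$ but realizes $p$, hence is not generic; thus the class of generic homeomorphisms is not closed under elementary equivalence and is therefore not axiomatizable.

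For Theorem~\ref{thm:Prime} I would show that the generic system is a prime model by showing it is atomic and invoking the continuous-logic theorem that a separable atomic structure is prime. Here the type analysis from the first step should be pushed further: I would argue that the generic homeomorphism realizes only principal types, the point being that the AGW density and cofinality of periodic towers lets one isolate the type of any tuple by a single formula describing how that tuple sits inside a finite periodic approximation. Equivalently, every non-principal type should encode an ``aperiodic defect'' of the kind captured by $p$, and the generic model omits all of these. Atomicity then gives primeness.

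The main obstacle is the dictionary between the dynamics and the type space: pinning down the type $p$ so that omitting it is exactly the AGW condition, proving the harder ``realize $\Rightarrow$ non-generic'' direction, and, most delicately, establishing \emph{full} atomicity for Theorem~\ref{thm:Prime}, i.e. that there are no hidden non-principal types beyond those controlled by $p$. Verifying that every tuple in the generic model has its type isolated by a finite periodic approximation is the step I expect to require the most care.
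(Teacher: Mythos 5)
Your high-level architecture (an omitting-types characterization from which non-axiomatizability and primeness both follow) matches the paper's, but the central object of your proof --- the type $p$ --- does not characterize the Akin--Glasner--Weiss generic homeomorphism, and this is a genuine gap rather than a detail to be filled in. The AGW genericity condition is not ``periodic behaviour is dense and cofinal''; it is the \emph{lifting property}, a statement about continuous equivariant surjections onto the finite spirals $(W_n,R_n)$ (two $n!$-cycles joined by a linear segment, with sixfold branching at each stage) and the ability to lift any such surjection to a finer one. Two concrete failures of your dictionary: first, the generic homeomorphism has \emph{no} periodic points (this is exactly what drives the contradiction in the paper's Lemma 3.2) and it \emph{does} have wandering points (Section 4), so ``density of periodic towers'' is not the right picture; second, a single one-variable type of the form you describe cannot separate the generic system from, say, an odometer, since a minimal system has no nontrivial invariant projections at all and hence omits your $p$ vacuously, yet odometers are not generic. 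The paper instead needs countably many types $\Sigma_{n,m}$ in many variables, each built from formulas describing a projection-configuration realizing $(W_n,R_n)$, an $m$-element refinement of it, and the impossibility of lifting to $(W_{n+k},R_{n+k})$. Moreover, the consistency of these types with $T$ is not free: it is established by the combinatorial Lemma 3.2 (a nested sequence of recurrent clopen sets shrinking to a point forces a periodic point) together with the ultrapower construction of Theorem \ref{thm:NonaxiomatizableLifting}; your proposal asserts consistency of $p$ without an argument, and for your particular $p$ the finite fragments are in fact not approximately satisfiable in the generic model.

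The primeness step has a second gap independent of the choice of types. To conclude atomicity you must omit, simultaneously, the countably many genericity types \emph{and} the type of a putative tuple realizing a non-principal type, and simultaneous omission of countably many types is known to fail in general in continuous logic. The paper circumvents this by observing that all the relevant formulas speak only about projections, passing to the discrete structure $(\op{CL}(2^{\mathbb{N}}),\sigma_B)$, applying the classical Omitting Types Theorem there, and then transferring elementarity back to $(C(2^{\mathbb{N}}),\sigma)$ using the density of linear combinations of projections. Your plan to prove atomicity directly in continuous logic by ``isolating the type of any tuple by a finite periodic approximation'' does not engage with this obstruction, and would need either the discrete detour or a substitute for the continuous Omitting Types Theorem to go through.
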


It is interesting that Kechris and Rosendal used Fra\"iss\'e limits (of discrete structures) in their proof of the existence of the generic homeomorphism.  To conclude the paper we give a Fra\"iss\'e construction of another homeomorphism of the Cantor set.  This homeomorphism is also ``generic" in a model-theoretic sense, but we show that it is \emph{not} the generic one constructed by Kechris and Rosendal, thus also showing that the notion of ``generic" arising from this Fra\"iss\'e limit is not the usual notion of genericity in the group of homeomorphisms.

\subsection*{Acknowledgements}
We are deeply grateful to Ian Putnam for several very helpful discussions about Cantor set dynamical systems.  The second author was financially supported by both an NSERC USRA and a JCURA award at the University of Victoria.

\section{Preliminaries}
Throughout this paper our context will be a compact metrizable space $X$ together with a homeomorphism $S : X \to X$.  Via Gel'fand duality the pair $(X, S)$ corresponds to the pair $(C(X), \sigma)$, where $C(X)$ is the commutative unital C*-algebra of continuous complex-valued functions on $X$, and $\sigma : C(X) \to C(X)$ is the automorphism of $C(X)$ defined by $\sigma(f) = f \circ S$.  We view $(C(X), \sigma)$ as a structure in the language of C*-algebras together with a new unary function symbol for representing $\sigma$.  Formally, in the setting of \cite{FarahEtAl}, we actually need several function symbols to represent $\sigma$ on the various sorts making up the structure $C(X)$.  Since $\sigma$ is an isometry there is no difficulty in determining either the appropriate target sorts or the moduli of uniform continuity for these new function symbols, so we will avoid dwelling on this issue here, and instead treat $\sigma$ as if it were represented by a single function symbol.

For the remainder of the paper, let $\mathcal{L}_{C^*}$ denote the language of unital C*-algebras, and let $\mathcal{L}$ be $\mathcal{L}_{C^*}$ together with a new unary function symbol $\sigma$, as described above.  There is an $\mathcal{L}_{C^*}$-theory $T_{C^*}$ such that models of $T_{C^*}$ are exactly the commutative unital C*-algebras, and by adding the axioms expressing that $\sigma$ is an automorphism we obtain the theory $T_{C^*-A}$ whose models are structures of the form $(M, \sigma)$ where $M$ is a commutative unital C*-algebra and $\sigma$ is an automorphism of $M$.

The following fact was observed in \cite{EagleVignati}:
\begin{fact}
There is an $\mathcal{L}_{C^*}$-theory $T_{Cantor}$ whose models are precisely the algebras $C(X)$ where $X$ is a compact $0$-dimensional Hausdorff space without isolated points.
\end{fact}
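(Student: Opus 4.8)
The plan is to take $T_{Cantor}$ to be $T_{C^*}$ augmented by two families of axioms: one forcing real rank zero and one forbidding minimal projections. The guiding dictionary, supplied by Gel'fand duality, is that for a compact Hausdorff space $X$ the projections of $C(X)$ are exactly the characteristic functions of clopen sets $U \subseteq X$; a minimal nonzero projection corresponds to a clopen singleton, i.e.\ to an isolated point of $X$; and $C(X)$ has real rank zero if and only if $X$ is $0$-dimensional (equivalently, the locally constant functions are uniformly dense in $C(X)$, equivalently the clopen sets form a basis). Thus I want axioms saying ``the real rank is $0$'' and ``every nonzero projection splits into two nonzero projections.''

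For real rank zero I would quote the fact, established in \cite{EagleVignati}, that real rank zero is axiomatizable among unital C*-algebras. The content is a local, quantitative reformulation: $A$ has real rank $0$ iff every self-adjoint contraction can, to within any prescribed $\epsilon$, be approximated by a self-adjoint element with finite spectrum, and for fixed $\epsilon$ the number of spectral values needed is bounded in terms of $\epsilon$ alone. This converts into a scheme $\{\beta_n\}_{n}$ of $\sup$--$\inf$ sentences, the $n$-th asserting that every self-adjoint contraction is within $1/n$ of a linear combination of $n$ pairwise orthogonal projections.

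For ``no minimal projections'' the key structural point is that the projections form a definable set: any element $p$ with small \emph{projection-defect} $\max(\|p^2-p\|,\|p-p^*\|)$ lies uniformly close to a genuine projection (apply continuous functional calculus to the near-projection), so quantification over projections is legitimate in continuous logic. I then take the single axiom
\[
\sup_{p}\ \inf_{q}\ \max\!\Big(\|qp-q\|,\ \big|\,\|q\|-\|p\|\,\big|,\ \big|\,\|p-q\|-\|p\|\,\big|\Big)=0,
\]
with $p,q$ ranging over projections. If $p=0$ the witness $q=0$ works; if $p\neq 0$ (so $\|p\|=1$) the axiom produces a projection $q\le p$ with $\|q\|=\|p-q\|=1$, i.e.\ a proper nonzero subprojection. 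Through the dictionary this says every nonempty clopen set can be partitioned into two nonempty clopen sets, which is exactly the statement that $X$ has no isolated points. I would then assemble $T_{Cantor}=T_{C^*}\cup\{\beta_n : n\ge 1\}\cup\{\text{splitting axiom}\}$ and verify both directions: a model is a commutative unital C*-algebra of real rank $0$ without minimal projections, hence by Gel'fand duality is $C(X)$ for a compact $0$-dimensional Hausdorff $X$ with no isolated points; conversely every such $C(X)$ satisfies all the axioms, the splitting axiom holding exactly rather than merely approximately, since genuine projections in $C(X)$ correspond to honest clopen sets.

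I expect the genuinely delicate step to be the ``no minimal projections'' axiom: confirming that the projections are a definable set (so that the displayed $\sup$--$\inf$ is a bona fide $\mathcal{L}_{C^*}$-sentence and is preserved under ultraproducts) and checking that its \emph{approximate} satisfaction in an arbitrary model still forces the splitting to hold exactly in the commutative case. The real-rank-zero scheme is standard once cited, and the duality translation is routine; it is the tension between the metric, approximate nature of continuous-logic axioms and the need to rule out even a single isolated point that requires the most care, which is precisely why phrasing the condition as norm-preserving splitting, rather than as the non-closed condition ``$q\neq 0$'', is the crucial device.
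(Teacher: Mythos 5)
Your proposal is essentially the paper's own approach: the paper states this as a Fact cited from \cite{EagleVignati}, where $T_{Cantor}$ is exactly the theory of commutative unital real rank zero C*-algebras without non-trivial minimal projections, which is the axiomatization you reconstruct (real rank zero $\leftrightarrow$ $0$-dimensionality, no minimal projections $\leftrightarrow$ no isolated points, with projections forming a definable set). Your attention to the approximate-versus-exact issue for the splitting axiom is well placed and is handled the same way in the cited source, using that the relevant norms take only the values $0$ and $1$ on genuine projections of a commutative C*-algebra.
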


Let $T_{Cantor-A} = T_{Cantor} \cup T_{C^*-A}$, so the models of $T_{Cantor-A}$ are of the form $(C(X), \sigma)$ where $\sigma$ is an automorphism and $X$ is a compact $0$-dimensional space without isolated points.  The Cantor set $2^{\mathbb{N}}$ is, up to homeomorphism, the unique compact metrizable $0$-dimensional space with no isolated points.  Thus, up to isomorphism, the only separable model of $T_{Cantor}$ is $C(2^{\mathbb{N}})$, and consequently every separable model of $T_{Cantor-A}$ is isomorphic to one of the form $(C(2^{\mathbb{N}}), \sigma)$ for some automorphism $\sigma$.

\begin{convention}
For the remainder of the paper, we will use the same symbol (typically $\sigma$) to denote both an autohomeomorphism of a space $X$ and the induced automorphism of the C*-algebra $C(X)$.
\end{convention}

\begin{defn}
Throughout this paper, if $\mathcal{C}$ is a class of compact metrizable spaces and $P$ is a property of topological dynamical systems, we say that $P$ is \emph{separably axiomatizable for $\mathcal{C}$} if there is a theory $T$ of $\mathcal{L}_{C*-A}$ such that separable models of $T$ are of the form $(C(X), \sigma)$, where $X \in \mathcal{C}$ and $\sigma$ satisfies $P$.
\end{defn}

\section{The theory of $C(2^\mathbb{N})$ with an automorphism}
We now restrict our attention to the theory $T_{Cantor-A}$; that is, we consider the case of dynamical systems on compact $0$-dimensional spaces without isolated points.  The theory $T_{Cantor}$ has quantifier elimination \cite{EagleVignati}.  After adding the symbol for the automorphism, things become substantially more complicated.

\begin{thm}\label{thm:NoCompanion}
$T_{Cantor-A}$ has no model companion.  In particular, $T_{Cantor-A}$ is not model complete and does not have quantifier elimination.
\end{thm}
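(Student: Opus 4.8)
The plan is to use the standard criterion that a theory has a model companion if and only if its class of existentially closed (e.c.) models is elementary; since a model companion is in particular model complete, and since any model complete theory (a fortiori one with quantifier elimination) is its own model companion, it suffices to show that the e.c.\ models of $T_{Cantor-A}$ do not form an axiomatizable class, and the ``in particular'' clauses then follow at once. I would first set up a dictionary between the logic and the dynamics. Using that $T_{Cantor}$ has quantifier elimination, every quantifier-free condition in $(C(X),\sigma)$ can be rephrased in terms of the projections of $C(X)$, that is, the clopen subsets of $X$, on which $\sigma$ acts as the Boolean automorphism induced by $S$; thus a quantifier-free formula records a finite Boolean configuration of clopen sets together with finitely many of their $\sigma$-translates, and an existential condition asserts that such a configuration is realizable. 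On the semantic side, an embedding $(C(X),\sigma)\hookrightarrow (C(Z),\sigma)$ of models of $T_{Cantor-A}$ corresponds to a continuous surjection $Z\to X$ intertwining the two homeomorphisms (a factor map), so ``realizable in an extension'' means ``realizable in some dynamical extension''. As a first illustration of this machinery, every model embeds into the algebra of the aperiodic system $(X\times 2^{\mathbb N}, S\times\omega)$, where $\omega$ is an odometer on $2^{\mathbb N}$; since fixed-point-freeness of $\sigma^k$ is an existential condition (a clopen partition separating each piece from its $\sigma^k$-image), every e.c.\ model is aperiodic.

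The non-axiomatizability I would extract from the recurrence structure, guided by the fact that the underlying atomless Boolean algebra has the strict order property---inclusion of projections is a definable partial order with infinite chains---a setting in which Kikyo and Shelah showed that discrete theories with an automorphism typically lack a model companion. Concretely, for each $n$ I would choose an e.c.\ model $M_n$ containing a clopen ``tower'' $U_n, \sigma U_n, \ldots, \sigma^{m_n-1}U_n$ that partitions $X_n$ and satisfies $\sigma^{m_n}U_n = U_n$ with $m_n\to\infty$ (for instance the pullback of a cylinder set in an odometer factor); although $\sigma^{m_n}U_n = U_n$, the system remains aperiodic because the induced return map is itself aperiodic. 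Forming a nonprincipal ultraproduct $M = \prod_{\mathcal U} M_n$, the class $U = [U_n]$ is then a clopen set of \emph{nonstandard} return time, with $\sigma^k U$ disjoint from $U$ for every standard $k$. The goal is to write down, over the parameter $U$, an existential condition that is realizable in a dynamical extension of $M$ but fails in $M$, witnessing that $M$ is not e.c. Were there a model companion $T^*$, each $M_n$ would model $T^*$, hence so would $M$ by {\L}o\'s's theorem, making $M$ e.c.---a contradiction.

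The main obstacle is to produce that distinguishing existential condition, and the difficulty is intrinsic: any condition phrased solely in terms of $U$ and its $\sigma$-translates takes the same value in $M$ as in every factor extension, since preimages under a factor map preserve all Boolean relations among clopen sets. The witnessing condition must therefore assert the existence of a \emph{new} clopen set constrained by the nonstandard tower---for example a set meeting the orbit of $U$ in a pattern that is consistent at every finite level but globally forces a return that the ambient system cannot supply. Pinning down such a condition, choosing the $M_n$ so that the relevant recurrence is uniformly present at each finite stage yet demonstrably absent from $M$, and finally verifying in the C*-algebra rather than merely in the Boolean algebra of projections that the associated infimum is not attained---with the approximate-versus-exact issues for projections absorbed by the real rank zero of $C(X)$---is the technical heart of the argument, and the step where the strict order property of the base theory is genuinely needed.
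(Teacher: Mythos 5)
There is a genuine gap, and you name it yourself: the entire argument hinges on producing, over the parameter $U$ in the ultraproduct $M$, an existential condition that is realizable in some dynamical extension of $M$ but not in $M$ itself, and this condition is never exhibited. Everything before that point is sound scaffolding --- the criterion that a model companion exists iff the existentially closed models form an elementary class, the dictionary between embeddings and factor maps, the observation that e.c.\ models are aperiodic, and the {\L}o\'s argument showing that a model companion would force $M$ to be e.c. But the proof of non-axiomatizability is exactly the step you defer, and you even record the obstruction that makes it hard (any condition in $U$ and its $\sigma$-translates transfers along factor maps), without overcoming it. It is also not clear that your proposed $M_n$ (aperiodic systems with a clopen tower of exact return time $m_n$) can always be arranged to be existentially closed while retaining the exact relation $\sigma^{m_n}U_n = U_n$; an e.c.\ model realizes \emph{every} consistent existential condition over $U_n$, and one would need to check that none of these destroys the exactness of the return time. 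As written, the proposal is a plausible research plan, not a proof.

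For contrast, the paper avoids this construction entirely: it invokes Kikyo's theorem that the theory of (atomless) Boolean algebras with an automorphism has no model companion, and transfers that result across the equivalence given by composing Gel'fand and Stone duality. The key technical point is that this functor preserves products and colimits, hence ultraproducts, and that the model-companion property can be phrased purely in terms of ultraproducts via Keisler--Shelah (in both discrete and continuous logic). You cite Kikyo--Shelah only as heuristic motivation for expecting failure; the shorter route is to use the Boolean-algebra result as the actual engine and reduce your theorem to it, which also disposes of the approximate-versus-exact issues for projections that your sketch flags at the end.
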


\begin{proof}
The result follows from the fact that the theory of Boolean algebras with an automorphism does not have a model companion \cite{kikyomodelcompanions}.  By using the Keisler-Shelah theorem (both the version for continuous logic and discrete logic) all of the properties required to express being a model companion can be reformulated in terms of ultraproducts.  An ultraproduct can be described as a colimit of products (see \cite{EagleThesis} for details).  The functor from commutative unital real rank 0 C*-algebras without minimal projections to atomless Boolean algebras obtained by compositing Gel'fand duality with Stone duality preserves products and colimits, so preserves ultraproducts.  Combined, these facts imply that if $T^*$ was a model companion of $T_{Cantor-A}$ then it would correspond to a model companion of the theory of atomless Boolean algebras with an automorphism, which does not exist.
\end{proof}

\subsection{Odometers}
We recall the important class of \emph{odometers} on the Cantor set.  If $G$ is a countable residually finite group, and $(G_n)_{n=1}^{\infty}$ are a decreasing sequence of normal subgroups with $\bigcap_{n=1}^{\infty}G_n = \{1_G\}$, then $G$ acts naturally on the inverse limit of the system $G/G_1 \leftarrow G/G_2 \leftarrow \cdots$.  Actions of this kind are variously called either $G$-odometers or profinite completions of $G$, and they arise naturally in both topological dynamics and ergodic theory (see \cite{Putnam}, \cite{Grigorchuk}, \cite{Cortez}).  As we are interested in the dynamics of a single homeomorphism we will restrict ourselves to the case $G=\mathbb{Z}$, in which case we have a more concrete description of odometers (see \cite{CMS} for more details):
\begin{defn}
Let $(a_n)_{n \geq 1}$ be a sequence of integers, each greater than or equal to $2$, and for each $n$ let $A_n = \lbrace 0,...,a_n -1 \rbrace$.  Let $X = \Pi A_n$ endowed with the product topology.  Define $\sigma : X \rightarrow X$ as follows.  Let $(b_n)_{n \geq 1} \in \Pi A_n$.  If $b_1 < a_1 - 1$, let $\sigma(b_n)_{n \geq 1} = (b_1 + 1, b_2, b_3, ...)$.  If $b_1 = a_1 - 1$, we let $\sigma(b_n)_{n \geq 1} = (0,...,0, b_n + 1, b_{n+1}, b_{n+2},...)$, where $n$ is the smallest number such that $b_n < a_n - 1$, or $\sigma(b_n)_{n \geq 1} = (0,0,0,...)$ if no such number $n$ exists.  The map $\sigma$ may also be characterized as addition by $1$ on the left with carry over (resembling something like an odometer in one's car, except that addition is on the left rather than the right).  An odometer is any Cantor system isomorphic to $(\Pi A_n, \sigma)$ for some sequence $(a_n)_{n \geq 1}$.
\end{defn}

\begin{thm}
There are non-elementarily equivalent odometers.  In particular, the theory $T_{Cantor-A}$ is not complete.
\end{thm}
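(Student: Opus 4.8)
The plan is to exhibit, for each integer $k \ge 2$, an $\mathcal{L}$-sentence $\phi_k$ whose value in an odometer detects divisibility of the associated supernatural number by $k$, and then to separate the dyadic and triadic odometers using $\phi_2$. First I would let $\phi_k$ assert the existence of a cyclic clopen partition of $X$ into $k$ pieces. In the algebra $C(X)$ this is phrased with projections: $\phi_k$ is the infimum over $k$-tuples $(p_0, \dots, p_{k-1})$ of the maximum of $\norm{p_i - p_i^*}$, $\norm{p_i^2 - p_i}$, $\norm{p_i p_j}$ for $i \ne j$, $\norm{p_0 + \dots + p_{k-1} - 1}$, and $\norm{\sigma(p_i) - p_{i+1 \bmod k}}$. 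A $k$-tuple witnessing $\phi_k = 0$ corresponds exactly to clopen sets $P_0, \dots, P_{k-1}$ partitioning $X$ with $S(P_i) = P_{i+1 \bmod k}$; nontriviality of the pieces is automatic, since $\sum_i p_i = 1$ together with the cyclic constraint forces every $p_i \ne 0$.

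The key point, and the step I expect to be the main obstacle, is that $\phi_k$ takes the value $0$ if and only if an \emph{exact} such partition exists; continuous logic a priori only provides approximate witnesses. To upgrade approximate witnesses to exact ones I would use two standard features of commutative C*-algebras: a near-idempotent self-adjoint element is norm-close to a genuine projection (via functional calculus), and in $C(X)$ any two distinct projections are at distance exactly $1$. Thus, given witnesses with error tending to $0$, I first perturb each $p_i$ to a genuine projection $\tilde p_i$ at small cost, and then observe that the relevant quantities $\norm{\sum_i \tilde p_i - 1}$, $\norm{\tilde p_i \tilde p_j}$, and $\norm{\sigma(\tilde p_i) - \tilde p_{i+1 \bmod k}}$ are now norms of integer-valued functions; once the error is below $1$ these norms vanish identically, yielding an exact cyclic partition. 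Hence $\phi_k = 0$ forces a genuine cyclic clopen $k$-partition, and the converse is immediate.

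Finally I would translate this into arithmetic. A cyclic clopen $k$-partition is exactly a factor map from $X$ onto the rotation $(\mathbb{Z}/k\mathbb{Z}, +1)$. Writing the odometer as the inverse limit $\varprojlim \mathbb{Z}/q_n\mathbb{Z}$ with $q_n = a_1 \cdots a_n$, every clopen set depends on finitely many coordinates, so such a factor map descends through some quotient $\mathbb{Z}/q_n\mathbb{Z} \to \mathbb{Z}/k\mathbb{Z}$; as a rotation-equivariant surjection of cyclic groups this exists if and only if $k \mid q_n$. Therefore $\phi_k = 0$ holds in an odometer precisely when $k$ divides some $q_n$, equivalently when $k$ divides the associated supernatural number. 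For the dyadic odometer ($a_n = 2$, so $q_n = 2^n$) we get $\phi_2 = 0$, whereas for the triadic odometer ($a_n = 3$, so $q_n = 3^n$) we have $2 \nmid 3^n$ for all $n$, so no exact partition exists and $\phi_2 > 0$. These two odometers therefore assign different values to the sentence $\phi_2$, so they are not elementarily equivalent, and consequently $T_{Cantor-A}$ is not complete. For the theorem only the case $k = 2$ is needed; one may alternatively phrase it via the Koopman operator, a $2$-partition being a $\pm 1$-valued eigenfunction $f$ with $\sigma(f) = -f$, which cannot exist when $-1$ lies outside the point spectrum, i.e. when $2 \nmid \mathfrak{n}$.
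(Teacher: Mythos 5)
Your proposal is correct and is essentially the paper's own argument: the paper also separates the dyadic and triadic odometers with the sentence asserting a cyclic clopen $2$-partition (a projection $x$ with $\sigma(x)=1-x$), refuting it in the triadic case because a cylinder set recurs after $3^n$ steps while the swap forces even return times. Your additional care about upgrading approximate witnesses to exact projections, and your general divisibility criterion for $\phi_k$, are worthwhile refinements of details the paper leaves implicit, but the route is the same.
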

\begin{proof}
Let $X=\{0, 1\}^\mathbb{N}$, and let $\sigma$ be the normal odometer map on $X$; that is, the map corresponding to the constant sequence $(2,2,2,...)$.  Let $Y=\{0, 1, 2\}^\mathbb{N}$, and let $\tau$ be the normal odometer on $Y$.  Then $X$ and $Y$ are Cantor sets, and both $(X, \sigma)$ and $(Y, \sigma)$ are odometers, so it suffices to show that $(C(X), \sigma) \not\equiv (C(Y), \tau)$.  In $(C(X), \sigma)$ there is a projection $x$ such that $x \neq 0, x\neq 1$, and $\sigma(x) = 1-x$.  The existence of such an $x$ can be expressed as a sentence on $L_{C^*-A}$.  We claim that $(C(Y),\tau)$ does not satisfy this sentence.

If $(C(Y), \tau)$ satisfied the sentence above, then we could find a non-trivial projection $z \in \operatorname{proj}(Y)$ whose support is a cylinder set that is a subset of the support of $x$, since cylinder sets form a basis for the topology on $Y$.  Then $z$ returns to itself after $3^n$ iterations of $\tau$, for some $n$.  But this is not compatible with the above condition, since if $\tau(x) = 1-x$ then $\tau^n(z) = z$ only when $n$ is even.
\end{proof}

We next show that elementarily equivalent odometers are isomorphic.  However, this does not give an example of a separably categorical theory of Cantor set dynamical systems, because we will also show that being an odometer is not an axiomatizable property.

\begin{lem}[\cite{CMS}]
The odometers corresponding to the sequences $(a_n)_{n \geq 1}$ and $(b_n)_{n \geq 1}$ are conjugate if and only if for every $N$ there exists an $M$ such that $a_1 \cdot ... \cdot a_N$ divides $b_1 \cdot ... \cdot b_M$, and for every $K$ there exists $L$ such that $b_1 \cdot ... \cdot b_K$ divides $a_1 \cdot ... \cdot a_L$.
\end{lem}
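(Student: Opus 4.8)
The plan is to isolate an explicit conjugacy invariant of an odometer that encodes exactly the divisibility data, and then play the two directions off against it. Write $p_n = a_1 \cdots a_n$ and $q_n = b_1 \cdots b_n$ for the partial products, and recall that the odometer on $\prod A_n$ is coordinatewise the inverse limit of the cyclic groups $\mathbb{Z}/p_n\mathbb{Z}$ with $\sigma$ acting as $+1$. For a Cantor system $(X,\sigma)$ I would set
\[
D(X,\sigma) = \{\, m \geq 1 : \text{there is a clopen } C \subseteq X \text{ with } X = \bigsqcup_{i=0}^{m-1} \sigma^i(C)\,\},
\]
the set of heights of cyclic clopen towers. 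This set is manifestly a conjugacy invariant: a conjugacy $h$ with $h\sigma = \tau h$ sends a height-$m$ tower with base $C$ to the tower with base $h(C)$, since $\tau^i(h(C)) = h(\sigma^i(C))$, so $D$ is preserved in both directions.

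The heart of the argument is the computation $D(\prod A_n, \sigma) = \{m : m \mid p_n \text{ for some } n\}$. First I would index the level-$n$ cylinders by $\mathbb{Z}/p_n\mathbb{Z}$ so that $\sigma$ acts as the cyclic shift $j \mapsto j+1$; then whenever $m \mid p_n$, grouping the cylinders according to their residue modulo $m$ produces a height-$m$ tower, giving the inclusion $\supseteq$. For $\subseteq$, given a height-$m$ tower with base $C$, I would use that $C$ is clopen and hence a finite union of level-$n$ cylinders for some $n$; writing $C$ as a subset $S \subseteq \mathbb{Z}/p_n\mathbb{Z}$, the requirement that $S, S+1, \dots, S+(m-1)$ partition $\mathbb{Z}/p_n\mathbb{Z}$ forces $m \cdot \abs{S} = p_n$, so $m \mid p_n$.

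With the invariant in hand, the forward direction is immediate: if the two odometers are conjugate then $D_a = D_b$, and since $p_N \in D_a$ this yields $p_N \mid q_M$ for some $M$, and symmetrically, which is exactly the stated divisibility condition. For the converse I would telescope. The divisibility hypotheses let me interleave the two sequences of partial products into a single chain $p_{n_1} \mid q_{m_1} \mid p_{n_2} \mid q_{m_2} \mid \cdots$ with both $(n_k)$ and $(m_k)$ strictly increasing. Each step of divisibility gives a canonical quotient of cyclic groups commuting with $+1$, so both odometers are realized as the inverse limit of one and the same tower $\mathbb{Z}/p_{n_1} \leftarrow \mathbb{Z}/q_{m_1} \leftarrow \mathbb{Z}/p_{n_2} \leftarrow \cdots$ (a cofinal subsequence of an inverse system has the same limit), and the resulting isomorphism intertwines the two $+1$ maps, yielding the conjugacy.

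I expect the main obstacle to be the inclusion $\subseteq$ in the computation of $D$ --- verifying that an \emph{arbitrary} clopen cyclic tower must have height dividing some $p_n$ --- since this is where one must rule out towers not aligned with the cylinder structure and justify the reduction to a single finite level. Everything else is either formal (the invariance of $D$) or a standard inverse-limit telescoping argument.
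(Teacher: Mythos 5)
The paper does not actually prove this lemma --- it is quoted from \cite{CMS} without proof --- so there is no in-paper argument to compare against; what follows is an assessment of your proof on its own terms. Your argument is correct and follows the standard route. The invariant $D(X,\sigma)$ is the set of heights of clopen towers that exactly tile $X$ (equivalently, the set of divisors of the associated supernatural number $\lim_n p_n$), and your two inclusions compute it correctly for an odometer. The two facts that carry the computation are (i) that $\sigma$ permutes the level-$n$ cylinders as the $+1$ cycle on $\mathbb{Z}/p_n\mathbb{Z}$ under the mixed-radix indexing --- including the carry step, which sends the all-maximal cylinder onto the all-zero cylinder and deserves the one line of verification you implicitly assume --- and (ii) that a clopen base $C$ is a finite union of cylinders of a single level $n$, after which the counting $m\cdot|S| = p_n$ is forced. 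Both hold. The converse via interleaving the partial products into a single cofinal inverse system of cyclic groups with the $+1$ maps is also standard and correct; the only point to check is that the composite of the canonical reductions $\mathbb{Z}/p_{n_{k+1}}\mathbb{Z}\to\mathbb{Z}/q_{m_k}\mathbb{Z}\to\mathbb{Z}/p_{n_k}\mathbb{Z}$ is again the canonical reduction, which it is. One remark worth making: your $\subseteq$ inclusion is essentially the same counting argument the authors later deploy inside the proof of Theorem \ref{thm:Odometers}, where a cyclically permuted clopen partition of size $m_n$ is decomposed into cylinder sets of a common length $l$ and counted to conclude $m_n$ divides $b_1\cdots b_l$; your lemma packages that combinatorial core as a conjugacy invariant rather than as an elementary-equivalence argument.
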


\begin{thm}\label{thm:Odometers}
Suppose that $(X, \sigma)$ and $(Y, \tau)$ are odometers on Cantor sets $X$ and $Y$.  Then $(C(X), \sigma) \equiv (C(Y), \tau)$ if and only if $(C(X), \sigma) \cong (C(Y), \tau)$.
\end{thm}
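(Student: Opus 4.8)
The plan is to prove the non-trivial direction, that elementary equivalence implies isomorphism; the reverse implication is immediate, since isomorphic structures are elementarily equivalent. By the Lemma of \cite{CMS} recalled above, two odometers are conjugate (equivalently, the structures $(C(X),\sigma)$ and $(C(Y),\tau)$ are isomorphic) precisely when they determine the same \emph{supernatural number}, meaning that they have the same set of finite divisors
\[ D(X,\sigma) = \{\, m \ge 1 : m \mid a_1 \cdots a_n \text{ for some } n \,\}. \]
Indeed, the symmetric divisibility condition in that Lemma is exactly the statement $D(X,\sigma) = D(Y,\tau)$. So it suffices to show that, for each $m$, membership of $m$ in $D(X,\sigma)$ is detected by a single $\mathcal{L}_{C^*-A}$-sentence whose value is determined by the theory of $(C(X),\sigma)$.

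First I would, for each fixed $m \ge 1$, write down a sentence $\phi_m$ asserting the existence of a projection $p$ such that $p, \sigma(p), \ldots, \sigma^{m-1}(p)$ are pairwise orthogonal and $\sum_{i=0}^{m-1}\sigma^i(p) = 1$. These conditions force $\sigma^m(p) = p$ automatically, since applying $\sigma$ to the sum and subtracting gives $\sigma^m(p) - p = 0$. On the topological side such a $p$ is the indicator function of a clopen set $U$, and the conditions say that the images of $U$ under $\sigma$ partition $X$ into $m$ cyclically-permuted clopen sets; equivalently, $p$ determines a factor map from $(X,\sigma)$ onto the cyclic rotation $(\mathbb{Z}/m\mathbb{Z}, +1)$.

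The key claim is then that $(C(X),\sigma) \models \phi_m$ if and only if $m \in D(X,\sigma)$. For the forward direction, if $m \mid a_1 \cdots a_n$ then the canonical factor map of the odometer onto the finite rotation $\mathbb{Z}/(a_1 \cdots a_n)$, followed by the quotient $\mathbb{Z}/(a_1 \cdots a_n) \to \mathbb{Z}/m\mathbb{Z}$, produces the required partition. For the converse I would use the structure of odometers as minimal rotations on the profinite group $G = \varprojlim \mathbb{Z}/(a_1 \cdots a_n)$: a cyclic clopen partition into $m$ pieces yields a factor onto $(\mathbb{Z}/m\mathbb{Z}, +1)$, and factors of a minimal group rotation correspond to closed subgroups, so $\mathbb{Z}/m\mathbb{Z}$ is a continuous quotient of $G$, which happens exactly when $m \in D(X,\sigma)$. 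I expect this converse --- extracting the arithmetic constraint $m \mid a_1 \cdots a_n$ from the mere existence of the partition, via the group-rotation structure --- to be the main obstacle, and the step most in need of care.

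Finally I would address the genuinely continuous-logic point that $\phi_m$ must be an honest $\mathcal{L}_{C^*-A}$-sentence, namely an infimum over $p$ of a combination of the terms $\norm{p^2 - p}$, $\norm{p - p^*}$, $\norm{p\,\sigma^i(p)}$, and $\norm{\sum_{i} \sigma^i(p) - 1}$, and that its value in $(C(X),\sigma)$ equals $0$ precisely when an exact partition exists. This uses that $C(X)$ has real rank $0$ and that the relevant relations are clopen, so that an approximate solution can be perturbed to an exact projection realizing the partition; hence the infimum is attained at $0$ rather than merely approached. Granting this, elementary equivalence gives that $\phi_m$ takes the same value in $(C(X),\sigma)$ and $(C(Y),\tau)$ for every $m$, whence $D(X,\sigma) = D(Y,\tau)$, and the Lemma of \cite{CMS} yields conjugacy and therefore isomorphism.
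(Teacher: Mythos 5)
Your proposal is correct and follows the same overall skeleton as the paper: both encode, for each $m$, the existence of a cyclic clopen partition of the space into $m$ pieces by a single $\mathcal{L}$-sentence (your single projection $p$ with orthogonal orbit summing to $1$ is interchangeable with the paper's tuple $x_1,\ldots,x_k$ cyclically permuted by $\sigma$), observe that elementary equivalence forces the two systems to satisfy the same such sentences, and then invoke the divisibility criterion from \cite{CMS} to upgrade this to conjugacy. The one place you genuinely diverge is the converse of the key claim, i.e.\ extracting $m \mid a_1\cdots a_n$ from the mere existence of the partition. The paper does this combinatorially and self-containedly: each $U_i$ is a finite disjoint union of cylinder sets, all of which can be taken at a common level $l$; since $\tau$ permutes level-$l$ cylinder sets cyclically, the $U_i$ all contain the same number of them, so $m$ divides $b_1\cdots b_l$ by counting. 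You instead appeal to the structure theory of minimal equicontinuous systems (factors of a minimal group rotation are quotients by closed subgroups, so a factor onto $(\mathbb{Z}/m\mathbb{Z},+1)$ forces $\mathbb{Z}/m\mathbb{Z}$ to be a finite continuous quotient of $\varprojlim \mathbb{Z}/(a_1\cdots a_n)$); this is a correct and standard fact --- equivalently one can say $e^{2\pi i/m}$ must be a continuous eigenvalue of the odometer --- but it imports more machinery than the paper's bare-hands count, and an even quicker elementary version is available: a cylinder set $C \subseteq U_0$ at level $n$ satisfies $\sigma^{a_1\cdots a_n}(C)=C$, so $\sigma^{a_1\cdots a_n}(U_0)\cap U_0 \neq \emptyset$, forcing $m \mid a_1\cdots a_n$. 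Your final paragraph on why the $\inf$ in $\phi_m$ being $0$ yields exact witnesses is a point the paper passes over silently; your justification is sound (in a commutative C*-algebra the product of two commuting projections and the defect $1-\sum_i\sigma^i(p)$ are themselves projections, so the relevant norms take only the values $0$ and $1$), and making it explicit is a small improvement in rigor.
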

\begin{proof}
The `if' direction is clear.  To prove the `only if' direction, let $(X,\sigma)$ be the odometer for the sequence of integers $(a_n)_{n \geq 1}$ and $(Y,\tau)$ the odometer for $(b_n)_{n \geq 1}$ and suppose $(C(X),\sigma) \equiv (C(Y),\tau)$.  We will show that $(X, \sigma)$ is topologically conjugate to $(Y, \tau)$, from which the conclusion follows.

For each $n \geq 1$ let $m_n = a_1 \cdot \ldots \cdot a_n$.  For any $k$ let $\phi_k$ be the sentence that asserts there are projections $x_1,\ldots,x_k$, all of disjoint supports which sum to $1$, and the automorphism sends $x_1 \mapsto x_2, \ldots ,x_{k-1} \mapsto x_k, x_k \mapsto x_1$.  Then $\forall n \geq 1$, $(C(X),\sigma) \models \phi_{m_n}$, so $(C(Y), \tau) \models \phi_{m_n}$.  Then we can find $m_n$ projections in $(C(Y),\tau)$ that satisfy $\phi_{m_n}$.  The supports of these projections, $U_1, \ldots ,U_{m_n}$, are clopen in $Y$ and covered by a collection of cylinder sets.  Each $U_i$ is compact, so there are disjoint cylinder sets $C^i_1,\ldots,C^i_{k_i}$ such that for $1 \leq i \leq m_n$, $U_i = \bigcup_{1 \leq j \leq k_i} C^i_j$.  We can also assume each of the $C$'s are all cylinder sets of finite sequences of the same length $l$, since we can break up any cylinder set into a disjoint union of cylinder sets corresponding to longer sequences.  Since $\tau(U_1) = U_2,\ldots,\tau(U_{m_n}) = U_1$, and $\tau$ sends each cylinder set to another set, we have that $k_1 = \ldots = k_{m_n}$.  Therefore $m_n$ divides the product $b_1 \cdot \ldots \cdot b_l$.  A similar argument shows that for any $N$ there is an $M$ such that $b_1 \cdot \ldots \cdot b_N$ divides $a_1 \cdot \ldots \cdot a_M$.  Thus $(X,\sigma)$ and $(Y,\tau)$ are topologically conjugate, so $(C(X),\sigma) \cong (C(Y), \tau)$.  
\end{proof}

\begin{thm}
The property of being an odometer on the Cantor set is not separably axiomatizable.
\end{thm}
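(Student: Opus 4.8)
The plan is to argue by contradiction, reducing the non-axiomatizability of the whole class of odometers to the construction of a single separable non-odometer that is elementarily equivalent to an odometer. Suppose, toward a contradiction, that being an odometer on the Cantor set were separably axiomatizable, witnessed by an $\mathcal{L}_{C^*-A}$-theory $T$ whose separable models are exactly the systems $(C(2^{\mathbb{N}}),\sigma)$ with $\sigma$ an odometer. Fix any odometer $M = (C(2^{\mathbb{N}}),\sigma)$; since $M$ is an odometer we have $M \models T$, and hence $T \subseteq \operatorname{Th}(M)$. Consequently every separable $N \equiv M$ is a separable model of $T$, and therefore must itself be an odometer. So it suffices to produce a separable $N$ with $N \equiv M$ that is \emph{not} an odometer.

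The key structural observation is that in any odometer every clopen set has a finite $\sigma$-orbit: a cylinder set in $\Pi A_n$ of length $l$ returns to itself after $m_l = a_1 \cdots a_l$ steps, and an arbitrary clopen set is a finite union of cylinders, hence periodic under $\sigma$. Periodicity of clopen sets is not first-order expressible, because the period is unbounded, and this is what I would exploit. Working in the commutative algebra $C(2^{\mathbb{N}})$, two distinct projections (clopen sets) are always at distance exactly $1$, so I consider the type
\[ \Sigma(x) = \{x = x^* = x^2\} \cup \{ \norm{\sigma^k(x) - x} \geq 1 : k \geq 1\}, \]
which asserts that $x$ is a projection whose $\sigma$-orbit is infinite. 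Each finite subset of $\Sigma$ mentions only finitely many $k$, say $k \leq K_0$, and is realized in $M$ by any cylinder set of period greater than $K_0$; hence $\Sigma$ is finitely satisfiable in $M$ and is therefore consistent with the complete theory $\operatorname{Th}(M)$. On the other hand $\Sigma$ is \emph{omitted} in $M$ itself, precisely because every clopen set in an odometer is periodic.

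To finish, I would realize $\Sigma$ in a model of $\operatorname{Th}(M)$ by compactness, and then apply the downward L\"owenheim--Skolem theorem for continuous logic to pass to a separable elementary submodel $N$ containing a realization $p$ of $\Sigma$. Then $N \equiv M$, $N$ is separable, and $N \models T_{Cantor-A}$, so $N$ is of the form $(C(2^{\mathbb{N}}), \rho)$; the element $p$ is a clopen subset of $2^{\mathbb{N}}$ with $\rho^k(p) \neq p$ for every $k \geq 1$, i.e.\ a clopen set with infinite orbit. By the structural observation above, $N$ cannot be an odometer, contradicting the conclusion of the first paragraph. (Equivalently, one may invoke Theorem \ref{thm:Odometers}: if $N$ were an odometer then $N \equiv M$ would force $N \cong M$, which is impossible since $N$ has a clopen set of infinite orbit while $M$ does not.) The step I expect to require the most care is the final bookkeeping in the continuous setting---ensuring that $\Sigma$ can be realized inside a genuinely \emph{separable} model of $\operatorname{Th}(M)$ while keeping that model a model of $T$---together with confirming that possessing a clopen set of infinite orbit really does preclude being an odometer.
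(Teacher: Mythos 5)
Your proposal is correct, but it takes a genuinely different route from the paper. The paper forms the ultraproduct $\prod_{\mathcal{U}}(C(X_p),\sigma_p)$ of the $p$-adic odometers over all primes with respect to a nonprincipal ultrafilter, isolates a combinatorial property $(*)$ (the existence of an odometer-like refining sequence of clopen partitions) enjoyed by every odometer, and shows the ultraproduct fails $(*)$ because any first partition into $a_1$ cyclically permuted clopen sets would force $p \mid a_1$ for infinitely many primes $p$; it then applies downward L\"owenheim--Skolem. You instead fix a \emph{single} odometer $M$ and realize the type $\Sigma(x)$ of a projection with infinite orbit, which is finitely satisfiable in $M$ because cylinder sets have arbitrarily large period and distinct projections in a commutative C*-algebra lie at distance $1$; compactness plus downward L\"owenheim--Skolem then yields a separable $N\equiv M$ containing a non-periodic clopen set, and the observation that every clopen set in an odometer is periodic (indeed $\sigma^{a_1\cdots a_l}$ fixes every length-$l$ cylinder, hence every clopen set) gives the contradiction. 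Both arguments ultimately rest on this periodicity of clopen sets and both end with L\"owenheim--Skolem, but your version is more elementary (no divisibility analysis across infinitely many odometers, no property $(*)$) and proves something slightly stronger: \emph{every} odometer is elementarily equivalent to a separable non-odometer, so no odometer has a separably categorical theory --- which sharpens the remark the paper makes after Theorem \ref{thm:Odometers}. The one step worth stating explicitly is that the realization of $\Sigma$ satisfies its conditions exactly (not merely approximately); this is fine because the conditions $1\dot{-}\norm{\sigma^k(x)-x}=0$ and the projection conditions are closed conditions, to which the continuous compactness theorem applies directly.
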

\begin{proof}
Suppose there is a theory $T$ whose separable models are exactly the odometers.  For each prime number $p$ let $(X_p,\sigma_p)$ be the odometer for the constant sequence $(p,p,p,\ldots)$.  Then for every prime $p$, $(C(X_p),\sigma_p) \models T$.  Let $\mathcal{U}$ be a nonprincipal ultrafilter on the set of prime numbers, and let $(A,\sigma)$ be the ultraproduct $(A, \sigma) = \prod_{\mathcal{U}}(C(X_p), \sigma_p)$.  Then $(A,\sigma) \models T$ by \L o\'s theorem.

Let us say that a dynamical system $(Y,\tau)$ has the property $(*)$ if there is a refining sequence of partitions of $Y$, each partition consisting of clopen sets, such that $\tau$ permutes the sets in these partitions in the same way as an odometer would with its cylinder sets.  That is, there is a sequence $(a_n)_{n \geq 1}$ of integers such that each $a_n \geq 2$, and a sequence $(P_n)_{n \geq 1}$ such that $P_n$ is a set of $a_1 \cdot \ldots \cdot a_n$ disjoint clopen sets in $Y$ refining $P_{n-1}$ such that for every $A \in P_{n-1}$ there are $a_n$ elements $B_1,\ldots,B_{a_n}$ of $P_n$ such that $A = B_1 \cup \ldots \cup B_{a_n}$ and $\tau(B_1) = B_2,\ldots,\tau(B_{a_n}) = B_1$.  Notice that this property is satisfied by every odometer.

We claim that $(A,\sigma)$ does not satisfy the property $(*)$.  Suppose it did.  Then we could find $(a_n)_{n \geq 1}$ and $(P_n)_{n \geq 1}$ as in the above paragraph.  Then we have a set of projections $P_1 = \{ x_1,\ldots,x_{a_1} \}$ in $A$, where $\sigma(x_1) = x_2, \ldots, \sigma(x_{a_1 -1}) = x_{a_1}, \sigma(x_{a_1}) = x_1$.  Since being a projection is a definable property, there is therefore some sequence $(y_2,y_3,y_5,y_7,\ldots)$ in the equivalence class of $x_1$, where each $y_p \in X_p$, such that $\mathcal{U}$-almost every $y_p$ is a projection such that $\sigma_p^{a_1}(y_p) = y_p$.  That is, 
\[P = \{p : p \text{ is prime and } y_p \text{ is a projection and } \sigma_p^{a_1}(y_p) = y_p\} \in \mathcal{U}.\]
Since for each $p \in P$, $\sigma_p^{a_1} (y_p) = y_p$, we must have that $p|a_1$.  But $P$ is infinite, so this is impossible.  Thus, $(A,\sigma)$ does not satisfy $(*)$.  By L\"owenheim-Skolem, find $(A_0,\sigma_0) \preceq (A,\sigma)$ which is separable.  Then $(A_0,\sigma_0)$ cannot satisfy $(*)$ either, since if it did then the sequence of projections making $(*)$ true for $(A_0,\sigma_0)$ would also make $(*)$ true for $(A,\sigma)$.  Hence, $(A_0,\sigma_0)$ cannot be an odometer.  On the other hand, $(A_0, \sigma_0) \models T$ because $(A, \sigma) \models T$, giving a contradiction.
\end{proof}

\section{The generic homeomorphism of $2^{\mathbb{N}}$}
We now turn our attention to a very specific kind of homeomorphism of the Cantor set.  Let $\mathcal{H}(2^\mathbb{N})$ be the group of homeomorphisms of the Cantor set, and equip $\mathcal{H}(2^\mathbb{N})$ with the topology of pointwise convergence.  Kechris and Rosendal \cite{KechrisRosendal} showed that $\mathcal{H}(2^\mathbb{N})$ has a dense $G_\delta$ conjugacy class.  Akin, Glasner and Weiss \cite{AGW} subsequently found an explicit description of this conjugacy class by constructing a homeomorphism $\rho : 2^\mathbb{N} \to 2^\mathbb{N}$ whose conjugacy class in $\mathcal{H}(2^\mathbb{N})$ is dense $G_\delta$.  Moreover, they isolated a key property, the \emph{lifting property}, and showed that the homeomorphisms satisfying the lifting property are exactly those that are topologically conjugate to the homeomorphism $\rho$ that they constructed.  For our purposes we do not need the details of the construction of $\rho$, as we will work entirely with the lifting property.

\subsection{Defining the lifting property}
Let $l$, $r$, and $m$ be symbols, and for $n \geq 1$ let $L_n = \{l\} \times \{0,\ldots, n! - 1\}$, $\widetilde{R}_n = \{r\} \times \{0,\ldots, n! - 1\}$, and $M_n=\{m\}\times\{-n+1,\ldots,n-1\}$.  Let $S_n = L_n \cup M_n \cup \widetilde{R}_n$.  We put a $\ \widetilde{} \ $ over the $R$ to distinguish it from the relation $R_n$ on $S_n$, which we now define.  

For $0 \leq k < n! - 1$ we let $(l,k)R_n(l,k+1)$, $(r,k)R_n(r,k+1)$, $(l,n!-1)R_n(l,0)$, and $(r,n!-1)R_n(r,0)$.  We also require that for $-n+1 \leq k < n - 1$, $(m,k)R_n(m,k+1)$, and $(l,0)R_n(m,-n + 1)$ and $(m,n - 1)R_n(r,0)$.  A pair $(S_n,R_n)$ is called an \textit{n-spiral}.  The picture one should have in mind is two copies of $n!$ points related to one another cyclically, connected by $2n-1$ points related linearly.  The reader is encouraged to draw a picture of the case where $n = 2$ and $3$ to gain a full understanding of the structure of these sets.

We now define three functions from $(S_{n+1},R_{n+1})$ to $(S_{n},R_{n})$.  For any symbol $a \in \{ l, m, r\}$, define $\xi_L$ and $\xi_G$ by 
\[\xi_L(a,k) = (l,k (\textrm{mod} \ n!)) \qquad \text{ and } \qquad \xi_G(a,k) = (r,k (\textrm{mod} \ n!)).\]
 The last map, $\xi_M$, is defined as follows:
 
 \[\xi_M(a, k) = \begin{cases} (a, k (\textrm{mod} \ n!)) &\text { if $a \in \{l, r\}$} \\ (m, k) &\text{ if $a=m$ and $-n+1\leq k \leq n-1$} \\ (l, 0) & \text { if $a=m$ and $k= -n$} \\ (r, 0) & \text{ if $a=m$ and $k=n$}\end{cases}\]
 
We think of the map $\xi_M$ as projecting the points in $(S_{n+1},R_{n+1})$ `straight down' onto $(S_{n},R_{n})$.  It can be checked that if two points are related by $R_{n+1}$, then their images under each of these functions are related under $R_n$.

Let $W_n$ be the union of the set of $6^n$ $n$-spirals indexed by the set of words of length $n$ built from the alphabet $\{ \xi_{L_1}, \xi_{L_2}, \xi_{M_1}, \xi_{M_2}, \xi_{G_1}, \xi_{G_2} \}$.  We give $W_n$ the discrete topology.  We also let $R_n$ denote the union of the relations on each of the copies of $S_n$, so that $R_n$ denotes a relation on $W_n$.  

We define the map $\xi : W_{n+1} \rightarrow W_n$ as follows.  For a point $x \in W_n$, let $a$ be the last letter in the word indexing the $(n+1)$-spiral from which $x$ was taken.  If $a = \xi_{L_1}$ or $\xi_{L_2}$ then $\xi(x) = \xi_L(x)$, if $a = \xi_{M_1}$ or $\xi_{M_2}$ then $\xi(x) = \xi_M(x)$, and if $a = \xi_{G_1}$ or $\xi_{G_2}$ then $\xi(x) = \xi_G(x)$.  By composition of these maps we may define a canonical map from $W_m$ to $W_n$, for any $n > n$, which we also denote by $\xi$.

For a Cantor system $(X,\sigma)$, when we write $\phi : (X, \sigma) \rightarrow (W_n,R_n)$ we mean that $\phi$ is a continuous surjection from $X$ to $W_n$ and for any $x \in X$, $\phi(x)R_n\phi(\sigma(x))$.  Also, if $Y$ is a finite discrete space (not necessarily $W_n$ for some $n$) and $\phi : X \rightarrow Y$ is a continuous surjection, then we let $A_\phi:=\{\phi^{-1}\{ y \} \vert$ $ y \in Y \}$.  The \emph{mesh size} of $A_\phi$ is the maximum of all the diameters of the clopen sets in $A_\phi$.

We are finally ready to define the lifting property for Cantor dynamical systems, which characterizes the generic homeomorphism up to topological conjugacy.  

\begin{defn}
We say that a Cantor system $(X,\sigma)$ has the \emph{lifting property} if whenever $\epsilon > 0$ and $\phi : (X,\sigma) \rightarrow (W_n,R_n)$ is a continuous surjection for some $n$, there exists $k \geq 1$ and a continuous surjection $\psi : (X, \sigma) \rightarrow (W_{n+k},R_{n+k})$ such that $A_\psi$ has mesh size less than $\epsilon$ and $\phi = \xi \circ \psi$.
\end{defn}

The definition of the lifting property makes reference to a specific metric on $X$, and hence is not suitable for use in our setting.  We therefore reformulate it in purely topological terms, which we will then be able to use in the context of commutative C*-algebras.

\begin{lem}\label{lem:LiftingPropertyTopological}
Let $X$ be a Cantor set and $\sigma$ be a homeomorphism on $X$.  Then $(X,\sigma)$ has the Lifting Property if and only if $\forall n\in \mathbb{Z}^+ $, if $\phi:(X,\sigma)\rightarrow (W_n,R_n)$ is a continuous surjection, and $A$ is a refinement of $A_\phi$, then $\exists k \in \mathbb{Z}^+$ and $\exists \psi:(X,\sigma)\rightarrow(W_{n+k},R_{n+k})$ such that $A_\psi$ refines $A$ and $\phi = \xi \circ \psi$, where $\xi$ is the canonical map from $(W_{n+k},R_{n+k})$ to $(W_n,R_n)$.
\end{lem}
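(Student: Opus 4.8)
The plan is to prove the two directions of the biconditional separately, with the content of each reducing to a standard fact about clopen partitions of the Cantor set: that mesh size (a metric notion) and refinement (a combinatorial notion) can be interchanged, at the cost of choosing the right auxiliary partition. In both directions the map $\psi$ produced by one formulation serves directly as the witness for the other; the requirements that $\psi$ be a continuous surjection respecting the spiral relations and satisfying $\phi = \xi \circ \psi$ transfer verbatim, so the only thing genuinely requiring argument is the translation between ``$A_\psi$ has small mesh'' and ``$A_\psi$ refines $A$''.

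For the direction asserting that the topological condition implies the Lifting Property, I would start with $\epsilon > 0$ and a continuous surjection $\phi : (X,\sigma) \to (W_n, R_n)$. Because $X$ is a compact zero-dimensional metric space, every clopen partition admits a clopen refinement of arbitrarily small mesh; I would fix such a partition $A$ refining $A_\phi$ with mesh size less than $\epsilon$. Applying the hypothesized topological condition to $\phi$ and $A$ yields some $k$ and $\psi : (X,\sigma) \to (W_{n+k}, R_{n+k})$ with $A_\psi$ refining $A$ and $\phi = \xi \circ \psi$. Since each cell of $A_\psi$ is contained in a cell of $A$, the mesh of $A_\psi$ is at most the mesh of $A$, hence less than $\epsilon$, so $\psi$ witnesses the Lifting Property.

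For the converse, I would begin with $\phi : (X,\sigma) \to (W_n, R_n)$ and an arbitrary clopen refinement $A = \{A_1, \ldots, A_m\}$ of $A_\phi$. The key observation is that, since the $A_i$ are finitely many pairwise disjoint compact sets, the separation constant $\delta = \min_{i \neq j} d(A_i, A_j)$ is strictly positive. Invoking the Lifting Property with $\epsilon = \delta$ produces $\psi$ with $A_\psi$ of mesh size less than $\delta$ and $\phi = \xi \circ \psi$. A cell $C$ of $A_\psi$ then has diameter less than $\delta$, so $C$ cannot meet two distinct sets $A_i$ and $A_j$ (two such points would lie at distance at least $\delta$), and hence $C$ is contained in a single $A_i$. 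Therefore $A_\psi$ refines $A$, and $\psi$ is the required witness.

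The main obstacle---really the only nontrivial point---is the Lebesgue-number argument in the converse direction: that a partition of sufficiently small mesh is forced to refine a prescribed clopen partition. Once the positive separation constant $\delta$ is in hand this is immediate, but it is exactly what allows the metric hypothesis of the Lifting Property to be converted into the purely combinatorial refinement statement needed later for the C*-algebraic reformulation. Everything else amounts to checking that refinement can only decrease mesh, and that the data $(k, \psi)$ pass unchanged between the two formulations.
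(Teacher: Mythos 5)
Your proposal is correct and follows essentially the same route as the paper: one direction converts a target mesh $\epsilon$ into a clopen refinement $A$ of mesh less than $\epsilon$ and uses that refining a partition can only shrink its mesh, while the other uses the positive separation constant of the finitely many disjoint clopen cells of $A$ to force any partition of sufficiently small mesh to refine $A$. The paper's proof is the same argument (its additional requirement that $\epsilon$ be smaller than the mesh of $A$ is not needed for the lemma as stated), so there is nothing to add.
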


\begin{proof}
For the forward direction, suppose $(X,\sigma)$ has the lifting property, $\phi:(X,\sigma)\rightarrow(W_n,R_n)$ is a continuous surjection, and $A$ refines $A_\phi$. Let $\epsilon$ be less than the shortest distance between any two clopen sets in $A$ and also less than the mesh size of $A$.  Notice also that any partition of $X$ with mesh size less than or equal to $\epsilon$ cannot be equal to $A$, since $A$ has a larger mesh size, and moreover will refine $A$, since otherwise a clopen set from that partition would have elements from two separate sets from $A$, which would force the mesh size to be greater than $\epsilon$.  Since $(X,\sigma)$ has the lifting property, $\exists k$ and $\psi:(X,\sigma)\rightarrow(W_{n+k},R_{n+k})$ such that $\phi = \xi \circ \psi$ and $A_\psi$ has mesh less than $\epsilon$, thereby refining $A$.

The other direction follows immediately by choosing $A$ to have mesh size less than a given fixed $\epsilon$.
\end{proof}

\subsection{Non-axiomatizability of the lifting property}
\begin{lem}
Let $(X,\sigma)$ have the lifting property and suppose $\phi:(X,\sigma)\rightarrow(W_n,R_n)$ is a continuous surjection.  Then there is a sequence of partitions $(P_i)_{i\in\mathbb{N}}$, each containing the same number of clopen sets, such that $\forall i \in \mathbb{N}, P_i$ refines $A_\phi$, and there is no positive integer k such that for infinitely many $i \in \mathbb{N}$, $\exists\psi_i : (X,\sigma)\rightarrow (W_{n+k},R_{n+k})$ such that ${A_\psi}_i$ refines $P_i$ and $\phi = \xi \circ \psi_i$.
\end{lem}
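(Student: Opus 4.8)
The plan is to realise all the $P_i$ as splittings of a single fibre of $\phi$, which makes the common cardinality transparent, and then to force the required non-uniformity by a diagonalisation against the finitely-blocked partitions available at each level. First I fix a point $y_0\in W_n$ and set $F=\phi^{-1}\{y_0\}$; since $X$ is perfect and $F$ is a nonempty clopen subset, $F$ is itself a Cantor set. Each $P_i$ will differ from $A_\phi$ only on $F$: I choose a nonempty clopen $V_i\subsetneq F$ and put
\[
P_i=\bigl(A_\phi\setminus\{F\}\bigr)\cup\{V_i,\;F\setminus V_i\}.
\]
Then every $P_i$ refines $A_\phi$ and consists of exactly $\abs{W_n}+1$ clopen sets, so the ``same number of clopen sets'' requirement holds automatically. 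The point of this shape is the following reformulation of liftability: if $\psi\colon(X,\sigma)\to(W_{n+k},R_{n+k})$ is a continuous surjection with $\phi=\xi\circ\psi$, then $A_\psi$ already refines $A_\phi$, so $A_\psi$ refines $P_i$ if and only if $V_i$ is a union of the blocks $\psi^{-1}\{z\}$ with $z\in\xi^{-1}\{y_0\}$. Thus $P_i$ has a level-$k$ lift precisely when $V_i$ is ``$\psi$-measurable inside $F$'' for some such $\psi$.

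Next comes the key structural observation: the number $m_k:=\abs{\xi^{-1}\{y_0\}}$ is a fixed finite integer depending only on the combinatorics of $\xi$, increasing in $k$, and so every level-$k$ lift cuts $F$ into exactly $m_k$ blocks. Writing
\[
\mathcal V_k=\bigl\{\,V\subseteq F\text{ clopen}: V\text{ is a block-union of some level-}k\text{ lift over }\phi\,\bigr\},
\]
each member of $\mathcal V_k$ is a union of pieces of some $m_k$-piece partition of $F$. By the topological lifting property (Lemma \ref{lem:LiftingPropertyTopological}) every clopen subset of $F$ lies in some $\mathcal V_k$, so $\bigcup_k\mathcal V_k$ is the whole clopen algebra of $F$. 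My goal is to choose, for each $i$, a clopen $V_i\notin\bigcup_{k<i}\mathcal V_k$. Granting this, if some fixed $k$ admitted a level-$k$ lift for infinitely many $i$, then for some $i>k$ we would have $V_i\in\mathcal V_k\subseteq\bigcup_{k'<i}\mathcal V_{k'}$, contrary to the choice of $V_i$; hence no such $k$ exists, which is exactly the conclusion.

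The heart of the proof—and the step I expect to be the main obstacle—is therefore to show that $\bigcup_{k<i}\mathcal V_k$ is a \emph{proper} family and to produce $V_i$ outside it. The strategy is to analyse which $m_k$-piece partitions of $F$ actually arise from lifts. Tracking the relation $R_{n+k}$ shows that a point of $F$ staying on the $l$-cycle returns to $F$ after $n!$ steps, and that along each cycle consecutive blocks are $\sigma$-images of one another; so the admissible partitions of $F$ are governed by the first-return map $\sigma^{n!}|_{F}$ together with the finer cyclic data coming from the $(n+k)$-spirals. The difficulty is the branching of $R_{n+k}$ at the junctions $(l,0)$ and $(m,\pm)$: at each passage a block may divide, by a free clopen choice, between continuing along a cycle and entering the middle segment, so a single level $k$ already yields infinitely many distinct partitions of $F$, and one must defeat \emph{all} of them simultaneously.

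I would overcome this by extracting from the branching a bounded ``return-depth'' invariant: that membership of $V$ in $\mathcal V_k$ forces $V$ to be decided by only boundedly many (a function of $k$) iterates of $\sigma^{n!}$, so that $\mathcal V_k$ cannot contain a set genuinely depending on deeper returns. Choosing $V_i$ to depend on more than this many returns of $\sigma^{n!}$—possible because $\bigcup_k\mathcal V_k$ exhausts the clopen algebra while no single $\mathcal V_k$ does—then places $V_i$ outside every $\mathcal V_k$ with $k<i$. This return-depth obstruction is the exact counterpart, for the generic homeomorphism, of the period/divisibility obstruction ($p\mid a_1$) that drove the proof that being an odometer is not separably axiomatizable, and making it precise against the branching freedom is where the real work lies.
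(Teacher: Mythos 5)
Your setup matches the paper's: the $P_i$ are obtained by splitting a single cell $F=\phi^{-1}\{y_0\}$ of $A_\phi$ into two clopen pieces, so the cardinality condition is automatic, and everything reduces to choosing the clopen sets $V_i\subseteq F$ so that no fixed level $k$ can accommodate infinitely many of the $P_i$. But that choice is exactly where your argument stops. You reduce the lemma to producing $V_i\notin\bigcup_{k<i}\mathcal V_k$, observe (correctly) that the branching of $R_{n+k}$ at the junction points means a single level $k$ already generates infinitely many admissible partitions of $F$, and then appeal to an unproved ``bounded return-depth invariant'' for membership in $\mathcal V_k$. You neither define this invariant nor prove the boundedness claim, and you say yourself that making it precise ``is where the real work lies.'' Without it there is no proof: properness of each $\mathcal V_k$, let alone of the finite unions $\bigcup_{k<i}\mathcal V_k$, is never established, and the free clopen choices at the branch points are precisely what make a naive counting or depth argument problematic.

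The paper closes this gap with a different, more dynamical choice of the distinguished sets. Iterating the lifting property produces a nested sequence of clopen sets $U_0\supseteq U_1\supseteq\cdots$ with diameters tending to $0$, each $U_i$ being the preimage of a point on the cyclic (right-hand) part of a spiral at some level, hence recurrent: $\sigma^{m}(U_i)\cap U_i\neq\emptyset$ for a suitable $m$. By compactness $\bigcap_i U_i=\{b\}$ for a single point $b$. Setting $P_i=(A_\phi\setminus\{\phi^{-1}\{a\}\})\cup\{U_i,\ \phi^{-1}\{a\}\setminus U_i\}$, any level-$k$ lift $\psi_i$ whose partition refines $P_i$ must place $b$ in a block of $A_{\psi_i}$ that contains some recurrent $U_j$; this forces that block to sit over the cyclic part of $W_{n+k}$ and hence to satisfy $\sigma^{(n+k)!}(V)\subseteq V$ (or the analogous statement with $\sigma^{-1}$ for the left cycle). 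If the same $k$ worked for infinitely many $i$, intersecting these blocks would give $\sigma^{(n+k)!}(b)=b$, contradicting the aperiodicity of systems with the lifting property. That recurrence-plus-shrinking mechanism is the missing idea; to salvage your framework you should take $V_i=U_i$ for such a nested recurrent sequence converging to a point, rather than trying to diagonalise combinatorially against all of $\mathcal V_k$.
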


\begin{proof}
Let $(X,\sigma)$ have the lifting property and let $\phi:(X,\sigma)\rightarrow(W_n,R_n)$ be a continuous surjection for some $n\in\mathbb{Z}^+$.  We construct a sequence of partitions of $X$, $(A_i)_{i\in\mathbb{N}}$, as follows.  Choose $\epsilon_0>0$, and let $A_0$ be the partition of $X$ obtained from finding a continuous surjection onto $W_{n+k_0}$, for some $k_0$, which composes with the canonical map $\xi : (W_{n+k_0}, R_{n+k_0}) \to (W_n, R_n)$ to give $\phi$, and where the mesh size of $A_0$ is less than $\epsilon_0$.  Let $\epsilon_1$ be half the mesh size of $A_0$, and repeat the process to find a refinement of $A_0$ with mesh size less than $\epsilon_1$, call it $A_1$, formed form a continuous surjection onto $W_{n +k_0+k_1}$, for some $k_1 \in \mathbb{Z}^+$, that collapses to the previous surjection onto $W_{n+k_0}$ when composed with $\xi$.  Continue in this way to obtain the sequence $(A_i)_{i \in \mathbb{N}}$.  Notice that as $i$ increases, the mesh size of $A_i$ tends to $0$.

We now define a sequence of clopen sets in $X$ as follows.  Choose a point, $a$, on the right side of one of the spirals in $W_n$.  Let $U_0$ be a clopen set in $A_0$ that is the pre-image of a point $a_0$ in the right-hand side of one of the finite spirals in $W_{n+k_0}$ that maps to $a$ under $\xi$.  Similarly, let $U_1$ be the pre-image of a point $a_1$ in the right hand side of a spiral in $W_{n + k_0 +k_1}$ that maps to $a_0$ under $\xi$.  Continue in this way to obtain a sequence $(U_i)_{i \in \mathbb{N}}$.  We observe a few properties of $(U_i)_{i \in \mathbb{N}}$.  First, as $i$ approaches infinity, the diameter of $U_i$ approaches zero.  Second, notice that $\forall i \in \mathbb{N}$, $\exists m \in \mathbb{Z}^+$ such that $\sigma^m (U_i) \cap U_i\neq\emptyset$.  Finally, since $\forall i \in \mathbb{N}$, $U_i\supseteq U_{i+1}$, by Cantor's Intersection Theorem, there is some element $b$ in $X$ such that $\bigcap_{i\in \mathbb{N}} U_i=\{ b \}$.  We are now ready to define our sequence of partitions $(P_i)_{i \in \mathbb{N}}$.

For each $i \in \mathbb{N}$, let $P_i = (A_\phi \backslash \{\phi ^{-1} \{ a \} \} ) \cup \{ U_i,\phi ^{-1} \{ a \} \backslash U_i \}$.  We claim that there is no $k \in \mathbb{Z} ^+$ such that for infinitely many $i \in \mathbb{N}$, $\exists \psi_i : (X, \sigma) \rightarrow (W_{n+k}, R_{n+k})$ such that $A_{\psi_i}$ refines $P_i$ and $\phi = \xi \circ \psi_i$.  To see why, let us suppose otherwise.  Then for some $k \in \mathbb{Z}^+$, for infinitely many $i \in \mathbb{N}$, $\exists \psi_i:(X,\sigma)\rightarrow(W_{n+k},R_{n+k})$ such that $\phi = \xi \circ \psi_i$ and $A_{\psi_i}$ refines $P_i$.  By passing to a relevant subsequence of $(P_i)_{i \in \mathbb{N}}$ we may assume without loss of generality that this statement holds for all $i$ in $\mathbb{N}$, not just some infinite subsequence.

For each $i \in \mathbb{N}$, let $V_i$ be such that $b \in V_i \in A_{\psi_i}$ (recall that $b$ satisfies $\bigcap_{i\in \mathbb{N}} U_i=\{ b \}$).  Notice that $\forall i \in \mathbb{N}$, $\exists j\geq i$ such that $U_j \subseteq V_i$.  However, we chose $U_j$ such that $\exists m \in \mathbb{Z}^+$ so that $\sigma^m (U_j) \cap U_j \neq \emptyset$.  This means that for each $i$, $V_i$ has a subset which returns to itself after a finite number of iterations of $\sigma$.  From this we may conclude that $\forall i \in \mathbb{N}$, $V_i$ is the pre-image of a point in either the left-hand or right-hand side of $W_{n+k}$.  It must be one of these cases for infinitely many $i$, so we define the subsequence $(V_{i_j})_{j \in \mathbb{N}}$  such that each $V_{i_j}$ is the pre-image under $\psi_{i_j}$ of a point on the same side of the spirals in $W_{n+k}$.  let us assume that each $V_{i_j}$ corresponds to some point on the right side of the spirals in $W_{n+k}$, but the rest of the proof can be adapted to the case where each $V_{i_j}$ corresponds to a point on the left side of $W_{n+k}$ by replacing each instance of $\sigma$ with $\sigma^{-1}$ from here on.

We have defined $(V_{i_j})_{j \in \mathbb{N}}$ such that $\bigcap_{j \in \mathbb{N}} V_{i_j} = \{ b \}$, but we have also defined each $V_{i_j}$ to be the pre-image under $\psi_{i_j}$ of a point on the right-hand side of one of the finite spirals in $W_{n+k}$, so $\forall j \in \mathbb{N}$, $\sigma^{(n+k)!} (V_{i_j}) \subseteq V_{i_j}$.  Notice also that $\forall j \in \mathbb{N}$, $\sigma^{(n+k)!} (b) \in \sigma^{(n+k)!} (V_{i_j})$, so $\sigma^{(n+k)!} (b) \in \bigcap_{j \in \mathbb{N}} \sigma^{(n+k)!} (V_{i_j})$, and thus $\{ \sigma^{(n+k)!} (b) \} \subseteq \bigcap_{j \in \mathbb{N}} \sigma^{(n+k)!} (V_{i_j}) \subseteq \bigcap_{j \in \mathbb{N}} V_{i_j} = \{ b \}$, so $b = \sigma^{(n+k)!} (b)$.  b is therefore a periodic point, which is not possible when $(X, \sigma)$ satisfies the lifting property.  Hence, we have arrived upon a contradiction.
\end{proof}

We are now prepared to show that the lifting property is not axiomatizable.

\begin{thm}\label{thm:NonaxiomatizableLifting}
If $(X, \sigma)$ is a Cantor system with the lifting property, then for any non-principal ultrafilter $\mathcal{U}$ on $\mathbb{N}$, the ultrapower $(C(X), \sigma)^{\mathcal{U}}$ corresponds to a dynamical system that does not have the lifting property.
\end{thm}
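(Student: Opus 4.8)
The plan is to use the preceding lemma as the engine, and to exploit the key feature of ultrapowers: a refining sequence of partitions indexed by $\mathbb{N}$ can be collapsed into a \emph{single} object living in the ultrapower. First I would take the sequence of partitions $(P_i)_{i \in \mathbb{N}}$ produced by the previous lemma, starting from some fixed continuous surjection $\phi : (X, \sigma) \to (W_n, R_n)$. The crucial combinatorial fact supplied by the lemma is that these partitions all have the same cardinality, each refines $A_\phi$, and there is \emph{no} single $k$ for which infinitely many of the $P_i$ admit a lift to $W_{n+k}$ compatible with $\phi$. Each $P_i$ corresponds to a finite tuple of projections in $C(X)$ (indicator functions of its clopen pieces), so the sequence $(P_i)_i$ assembles, via the diagonal, into a tuple of elements of the ultrapower $C(X)^{\mathcal{U}}$; by \L o\'s's theorem these limit elements are again projections and assemble into a partition $P_{\mathcal{U}}$ of the spectrum of $C(X)^{\mathcal{U}}$ that refines the image of $A_\phi$.

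Second, I would observe that $\phi$ itself, being a fixed continuous surjection onto the finite object $(W_n, R_n)$, lifts to a continuous surjection $\phi_{\mathcal{U}} : (C(X)^{\mathcal{U}}, \sigma^{\mathcal{U}}) \to (W_n, R_n)$ in the obvious way, since $W_n$ is finite and the relations $R_n$ are preserved under taking the ultrapower (the condition ``$\phi(x) R_n \phi(\sigma(x))$'' is first-order and transfers). So in the ultrapower we have a genuine instance of the hypothesis in the topological lifting property (Lemma \ref{lem:LiftingPropertyTopological}): a continuous surjection $\phi_{\mathcal{U}}$ onto $(W_n, R_n)$ together with a refinement $P_{\mathcal{U}}$ of $A_{\phi_{\mathcal{U}}}$. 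If $(C(X)^{\mathcal{U}}, \sigma^{\mathcal{U}})$ had the lifting property, then there would exist some single $k$ and a continuous surjection $\Psi : (C(X)^{\mathcal{U}}, \sigma^{\mathcal{U}}) \to (W_{n+k}, R_{n+k})$ with $A_{\Psi}$ refining $P_{\mathcal{U}}$ and $\phi_{\mathcal{U}} = \xi \circ \Psi$.

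The main obstacle — and the technical heart of the argument — is to descend this single ultrapower lift $\Psi$ back down to the coordinates and contradict the lemma. The point is that $\Psi$ is determined by finitely many projections (since $W_{n+k}$ is finite), each of which is represented by a $\mathcal{U}$-indexed sequence of projections in $C(X)$; by \L o\'s's theorem, the finitely many equational and relational conditions defining ``$\Psi$ is a continuous surjection onto $W_{n+k}$ intertwining $\sigma$ with $R_{n+k}$, refining $P_i$, and satisfying $\phi = \xi \circ \Psi$'' hold in $(C(X), \sigma)$ for $\mathcal{U}$-almost every coordinate $i$. Here I must be careful that surjectivity onto the finite set $W_{n+k}$ and the refinement relation $A_{\Psi} \preceq P_{\mathcal{U}}$ are expressible as first-order conditions (or at least as conditions stable under \L o\'s), which they are because everything in sight is finite and clopen partitions correspond to projections with the exact arithmetic relations ``orthogonal,'' ``sum to $1$,'' and ``$\sigma$ maps this one to that one.'' Thus for $\mathcal{U}$-almost every $i$ we obtain a $\psi_i : (X, \sigma) \to (W_{n+k}, R_{n+k})$ with $A_{\psi_i}$ refining $P_i$ and $\phi = \xi \circ \psi_i$, all with the \emph{same fixed} $k$ coming from the ultrapower lift. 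Since $\mathcal{U}$ is non-principal this holds for infinitely many $i$, directly contradicting the conclusion of the previous lemma. The delicate step to get right is confirming that ``refines $P_i$'' transfers coordinatewise despite $P_{\mathcal{U}}$ being a limit object; I would handle this by noting that $A_{\Psi}$ refining $P_{\mathcal{U}}$ amounts to finitely many projection inequalities $p \leq q$ among the generators, each of which is a closed condition that \L o\'s pushes to almost every coordinate.
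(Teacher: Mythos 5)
Your argument is correct and follows essentially the same route as the paper's proof: invoke the preceding lemma to obtain the sequence of partitions $(P_i)$, assemble them into a single refinement of (the lift of) $A_\phi$ in the ultrapower, and observe that a lift to some $W_{n+k}$ there would, by \L o\'s, descend to lifts with the same fixed $k$ at $\mathcal{U}$-many (hence infinitely many) coordinates, contradicting the lemma. In fact your write-up makes explicit the \L o\'s transfer steps (exactness of the projection relations at almost every coordinate) that the paper's very brief proof leaves implicit.
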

\begin{proof}
For any $n$, we may find a collection of projections which serve as indicator functions of the elements of $A_\phi$, where $\phi : (X, \sigma) \rightarrow (W_n,R_n)$ is a continuous surjection.  By the preceding lemma there is a sequence of further subdivisions of $(X, \sigma)$ for which there is no natural number $k$ such that, for more than finitely many of these subdivisions, we could lift to a further refinement of $(X, \sigma)$ corresponding to $(W_{n+k}, R_{n+k})$.  Thus, if $(C(X), \sigma)$ has the lifting property, where $\sigma$ denotes both the homeomorphism of $X$ and the isomorphism of $C(X)$ induced by $\sigma$, $(C(X), \sigma)^U$ cannot have the lifting property.
\end{proof}

\begin{cor}
The lifting property is not axiomatizable over $T_{Cantor-A}$.
\end{cor}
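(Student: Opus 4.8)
The plan is to derive the corollary from Theorem \ref{thm:NonaxiomatizableLifting} by invoking the fundamental model-theoretic constraint that any axiomatizable class must be closed under ultraproducts, and in particular under ultrapowers. I would argue by contraposition: assuming the lifting property were cut out over $T_{Cantor-A}$ by some set of $\mathcal{L}$-conditions $\Sigma$, so that the class in question is $\mathrm{Mod}(T_{Cantor-A} \cup \Sigma)$, I would exhibit a single model of that class whose ultrapower leaves the class, contradicting \L o\'s's theorem.

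First I would record that the class is nonempty and contains a separable member: by the construction of Akin, Glasner, and Weiss \cite{AGW}, the generic homeomorphism $\rho$ of $2^{\mathbb{N}}$ has the lifting property, so $(C(2^{\mathbb{N}}), \rho)$ is a separable model of $T_{Cantor-A}$ lying in the class. Next I would fix a non-principal ultrafilter $\mathcal{U}$ on $\mathbb{N}$ and form the ultrapower $(C(2^{\mathbb{N}}), \rho)^{\mathcal{U}}$. Since the models of $T_{Cantor-A}$ are precisely the models of a theory, this ultrapower is again a model of $T_{Cantor-A}$, and hence corresponds by Gel'fand duality to a pair $(Y, \tau)$ with $Y$ compact $0$-dimensional and without isolated points; the topological reformulation of the lifting property from Lemma \ref{lem:LiftingPropertyTopological} is what makes the property meaningful for this (generally non-separable) $Y$.

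Now the core step: if $\Sigma$ axiomatized the lifting property over $T_{Cantor-A}$, then since $(C(2^{\mathbb{N}}), \rho) \models T_{Cantor-A} \cup \Sigma$, \L o\'s's theorem in the continuous setting would give $(C(2^{\mathbb{N}}), \rho)^{\mathcal{U}} \models T_{Cantor-A} \cup \Sigma$, so its corresponding system would have the lifting property. But Theorem \ref{thm:NonaxiomatizableLifting} asserts exactly that this ultrapower does \emph{not} have the lifting property, a contradiction. Hence no such $\Sigma$ can exist.

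I expect the only genuine point requiring care is the bookkeeping around closure under ultraproducts in the continuous framework: one must confirm that \L o\'s's theorem applies to the expanded language $\mathcal{L}$ (with the symbol $\sigma$), so that ultrapowers of models of $T_{Cantor-A} \cup \Sigma$ stay in the class, and that ``axiomatizable over $T_{Cantor-A}$'' is indeed being used in the sense of $\mathrm{Mod}(T_{Cantor-A} \cup \Sigma)$ for some $\Sigma$. There is no substantive obstacle beyond confirming that the lifting property of the non-separable ultrapower is read through its partition-refinement formulation from Lemma \ref{lem:LiftingPropertyTopological}, which is the only reading under which the cited theorem has content; with that in place the corollary is an immediate appeal to the theorem.
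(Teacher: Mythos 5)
Your proposal is correct and is exactly the argument the paper intends: the corollary is stated without proof precisely because it follows immediately from Theorem \ref{thm:NonaxiomatizableLifting} together with closure of axiomatizable classes under ultrapowers via \L o\'s's theorem. Your extra care about reading the lifting property of the non-separable ultrapower through the topological reformulation of Lemma \ref{lem:LiftingPropertyTopological} matches how the paper handles this point.
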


\subsection{Omitting types}
\begin{thm}\label{thm:OmittingTypes}
There is a countable sequence of types such that a model of $T_{Cantor-A}$ omits these types if and only if it satisfies the lifting property.
\end{thm}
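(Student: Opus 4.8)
The plan is to reduce the lifting property, as reformulated topologically in Lemma \ref{lem:LiftingPropertyTopological}, to a statement about finite configurations of projections, and then to encode the \emph{failure} of each such configuration to lift as the realization of a type. First I would record the finitary dictionary. Since $X$ is compact and $0$-dimensional, every clopen partition is finite, so a continuous surjection $\phi : (X,\sigma)\to(W_n,R_n)$ is the same datum as a finite tuple of projections $\bar x = (x_w)_{w\in W_n}$ in $C(X)$ that are pairwise orthogonal, sum to $1$, are each nonzero, and satisfy the relation encoding $\phi(x)R_n\phi(\sigma(x))$, namely that $\sigma$ carries the support of $x_w$ into the union of the supports of those $x_{w'}$ with $w R_n w'$. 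All of these are closed conditions in $\mathcal L$. In the same way a refinement $A$ of $A_\phi$ is coded by a further finite tuple $\bar y$ of nonzero projections partitioning each $x_w$, and a lift $\psi : (X,\sigma)\to(W_{n+k},R_{n+k})$ with $\xi\circ\psi=\phi$ and $A_\psi$ refining $A$ is coded by a tuple $\bar s = (s_v)_{v\in W_{n+k}}$ subject to the analogous closed conditions together with $\sum_{\xi(v)=w} s_v = x_w$ for each $w$ and $s_v \le y_{j(v)}$ for suitable $j(v)$.

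Next I would build the family of types. Because the target $W_n$ is fixed once $n$ is chosen, and a refinement has only finitely many pieces, the possible combinatorial shapes of a pair $(\phi, A)$ form a countable set; index them by pairs $(n,r)$. For each such pair let $p_{n,r}(\bar x, \bar y)$ be the type containing: (a) the closed conditions asserting that $\bar x$ codes a continuous surjection onto $W_n$; (b) the closed conditions asserting that $\bar y$ codes a refinement of $A_\phi$ of shape $r$; and (c) for every $k\ge 1$ a condition $\theta_k(\bar x,\bar y)=0$ expressing that there is no lift onto $W_{n+k}$ over $\phi$ refining $A$. To make (c) precise, let $E_k(\bar s,\bar x,\bar y)$ be the (truncated, hence bounded) sum of the norms of the defects measuring how far $\bar s$ is from coding such a lift, and set $G_k(\bar x,\bar y)=\inf_{\bar s} E_k(\bar s,\bar x,\bar y)$, which is a definable predicate. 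A lift exists exactly when this infimum vanishes; the content of (c) is to assert that it does not.

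The main obstacle is turning ``$G_k>0$'', an open condition, into a genuine closed condition, and for this I would prove a uniform rigidity lemma: there is a constant $c_k>0$, depending only on $k$ and the shape, such that whenever $E_k(\bar s,\bar x,\bar y)<c_k$ in any model there is already an exact lift. This is a weak-stability statement for the finite system of relations defining a lift, and it should follow from the standard fact that projections close to satisfying finitely many $C^*$-algebraic relations can be perturbed to projections satisfying them exactly, together with the discreteness of $W_{n+k}$, which makes the combinatorial constraints all-or-nothing. Granting this, $G_k$ takes values in $\{0\}\cup[c_k,\infty)$, so $\theta_k:=\max(0,\,c_k-G_k)$ satisfies $\theta_k=0$ if and only if no lift exists, and $\theta_k=0$ is a closed condition. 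The delicate point inside the rigidity lemma is the surjectivity of $\psi$: the requirement $s_v\neq 0$ cannot be penalized continuously, so an approximating sequence could let a piece vanish. I expect this to be the real work, to be handled either by absorbing surjectivity into the remaining constraints, using the freedom provided by the six-letter alphabet defining $\xi$ to spread any non-surjective lift out to a surjective one at a possibly larger level, or by checking that for the purposes of Lemma \ref{lem:LiftingPropertyTopological} the weak (not necessarily surjective) and strong lifting properties coincide.

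Finally the biconditional should fall out. If $(C(X),\sigma)$ realizes some $p_{n,r}$ by a tuple $(\bar a,\bar b)$, then $\bar a$ codes a surjection $\phi$ onto $W_n$, $\bar b$ codes a refinement $A$ of $A_\phi$, and condition (c) says that no lift exists at any level $k$; by Lemma \ref{lem:LiftingPropertyTopological} this is exactly a witness that $(X,\sigma)$ fails the lifting property. Conversely, if $(X,\sigma)$ fails the lifting property then, again by Lemma \ref{lem:LiftingPropertyTopological}, there are $n$, a surjection $\phi$ onto $W_n$, and a refinement $A$ of $A_\phi$ admitting no lift at any level, and the projections coding $\phi$ and $A$ then realize the corresponding type $p_{n,r}$. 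Hence $(C(X),\sigma)$ omits every $p_{n,r}$ if and only if $(X,\sigma)$ has the lifting property, and since there are only countably many of the types $p_{n,r}$, this is the required countable sequence.
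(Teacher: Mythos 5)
Your architecture matches the paper's: encode a surjection onto $W_n$ and a refinement of $A_\phi$ by finite tuples of projections, form one type per $(n,\text{refinement shape})$ whose $k$-th condition asserts the non-existence of a lift to $W_{n+k}$, and invoke Lemma \ref{lem:LiftingPropertyTopological} for the biconditional. But the two steps you leave open are exactly where the content lies. First, your ``uniform rigidity lemma'' is conjectured rather than proved, and you yourself flag surjectivity as a point where it might fail. The paper avoids any perturbation argument: it quantifies $\sup_{\bar z}$ only over the definable set of (tuples of) projections, and for genuine projections in a commutative C*-algebra every constraint in sight --- pairwise orthogonality, summing to a prescribed projection, non-vanishing, compatibility with $\sigma$ and with the collapsing map $\xi$ --- is the sup-norm of an integer-valued function and hence $\{0,1\}$-valued. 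So $\max\{\varphi_3,\varphi_4,\varphi_5\}$ already takes values in $\{0,1\}$, the expression $\sup_{\bar z}\bigl(1 \dot{-} \max\{\varphi_3,\varphi_4,\varphi_5\}\bigr)$ vanishes exactly when no lift exists, and no constant $c_k$ or weak-stability lemma is needed. You should either adopt this device or actually supply the rigidity lemma; as written, the central claim that ``no lift exists'' is a closed condition is unsupported.

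Second, and more seriously as a matter of what the theorem asserts: you never check that any of your $p_{n,r}$ is consistent with $T_{Cantor-A}$, i.e., that these are types at all rather than vacuously omitted condition sets. The paper devotes the final step of its proof to exactly this, combining the unlabelled lemma preceding Theorem \ref{thm:NonaxiomatizableLifting} (the sequence of partitions $(P_i)$ refining $A_\phi$ that admit no lift at any uniform level $k$) with the ultrapower of Theorem \ref{thm:NonaxiomatizableLifting} to exhibit, for each index, a model of the theory realizing the corresponding type. Without this, your argument shows only that a certain countable family of condition sets is jointly omitted if and only if the lifting property holds; you still owe the realization argument to earn the word ``type.''
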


\begin{proof}
A Cantor system $(X, \sigma)$ does not have the lifting property if and only if there is $n \in \mathbb{Z}^+$, $\alpha : (X,\sigma) \rightarrow (W_n,R_n)$, and a proper refinement $A$ of $A_\alpha$ such that $\forall k \in \mathbb{Z}^+$, if $\beta : (X,\sigma) \rightarrow (W_{n+k},R_{n+k})$ and $A_\beta$ is a proper refinement of $A$, then $\alpha \neq \xi \circ \beta$.  We will construct a collection of formulas, each describing a possible way in which the above statement can be realized, so that the omission of this collection of formulas is equivalent to the lifting property.

For any positive integer $n$, let $f(n) = 6^n(2n! + 2n - 1)$.  Let $n,m,k \in \mathbb{N}$ such that $f(n) < m < f(n + k)$.  Let $\varphi_1(x_1,\ldots,x_{f(n)})$ be the formula indicating that $x_1,\ldots,x_{f(n)}$ are projections, the product of any two is $0$, and that the homeomorphism $\sigma$ relates the supports of these projections in the same way that the elements of $W_n$ are related by $R_n$.  Let $\varphi_2(x_1,\ldots,x_{f(n)},y_1,\ldots,y_m)$ say $y_1,\ldots,y_m$ are projections, the product of any two is $0$, and that $y_1,\ldots,y_m$ refine $x_1,\ldots,x_{f(n)}$.  It does this with a disjunction over all possible ways $y_1,\ldots,y_m$ might refine $x_1,\ldots,x_{f(n)}$.  For example, one term in this disjunction is
\[\max \left\{ \norm{y_1 -x_1}, \ldots, \norm{y_{f(n)-1} - x_{f(n)-1}}, \norm{(\sum_{i=f(n)}^{m} y_i)  - x_{f(n)}} \right\}.\]  
Thus, $\max \{ \varphi_1, \varphi_2 \}$ says that $x_1,\ldots,x_{f(n)}$ are projections whose supports are the image of a continuous function from $(W_n, R_n)$, and are refined by the supports of $y_1,\ldots,y_m$.  Let $\varphi_3$ say that $z_1,\ldots,z_{f(n+k)}$ are projections whose supports are the image under a continuous function of $(W_n,R_n)$.  Let $\varphi_4(x_1,\ldots,x_{f(n)},z_1,\ldots,z_{f(n+k)})$ say that the supports of $z_1,\ldots,z_{f(n+k)}$ collapse onto the supports of $x_1,\ldots,x_{f(n)}$ with a disjunction over all possible ways this can happen (each disjunct corresponding to the different ways a spiral in $W_{n+k}$ could collapse onto any of the spirals in $W_n$ as dictated by the canonical map $\xi$).  Let $\varphi_5(y_1,\ldots,y_m,z_1,\ldots,z_{f(n+k)})$ say $z_1,\ldots,z_{f(n+k)}$ refines $y_1,\ldots,y_m$ with a disjunction over all possible ways this can happen.  Define 
\[\psi_{n,m,k}(x_1,\ldots,x_{f(n)},y_1,\ldots,y_m) = \max \left\{ \varphi_1, \varphi_2, \sup_{z_1, \ldots, z_{f(n+k)} \in \operatorname{proj}}(1 \dot{-}\max \{ \varphi_3,\varphi_4,\varphi_5 \})\right\}.\]  
Then $\psi_{n,m,k}$ says that there is $\alpha : (X,\sigma) \rightarrow (W_n,R_n)$ and a proper refinement $A$ of $A_\alpha$, where $A$ consists of $m$ clopen sets, such that if $\beta : (X,\sigma) \rightarrow (W_{n+k},R_{n+k})$ and $A_\beta$ is a proper refinement of $A$, then $\alpha \neq \xi \circ \beta$.

Let $\Sigma_{n,m} = \{ \psi_{n,m,k} \vert m <f(n+k) \}$, for $f(n) < m$.  Then a model satisfies the lifting property if and only if the model omits each $\Sigma_{n,m}$.  Therefore, all that remains is to show that each $\Sigma_{n,m}$ is indeed a type.  By Lemma 3.2, if $\alpha:(X,\sigma)\rightarrow(W_n,R_n)$ is a continuous surjection, then there is a sequence of partitions $(P_i)_{i\in\mathbb{N}}$, each the containing the same number of clopen sets, such that $\forall i \in \mathbb{N}, P_i$ refines $A_\alpha$, and there is no positive integer k such that for infinitely many $i \in \mathbb{N}$, $\exists\beta_i : (X,\sigma)\rightarrow (W_{n+k},R_{n+k})$ such that ${A_\beta}_i$ refines $P_i$ and $\alpha = \xi \circ \beta_i$.  In the proof of lemma 3.2 we have that $\vert P_i \vert = \vert A_\alpha \vert + 1$, but in fact $\vert P_i \vert$ can be made to have any size $m$ such that $m > f(n)$.  Thus, the ultrapower constructed in the proof of Theorem \ref{thm:NonaxiomatizableLifting}  will satisfy $\Sigma_{n,m}$.  Thus, each $\Sigma_{n,m}$ is a type.
\end{proof}

\begin{cor}\label{cor:NotSepCategoricalLifting}
Let $T = \op{Th}((C(2^\mathbb{N}), \sigma))$, where $\sigma$ is a generic homeomorphism of the Cantor set.  Then $T$ is not separably categorical.
\end{cor}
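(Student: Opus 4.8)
The plan is to exhibit two non-isomorphic separable models of $T$ that are distinguished by whether or not they satisfy the lifting property. First I would note that $(C(2^\mathbb{N}), \sigma)$ is itself a separable model of $T$, and since $\sigma$ is a generic homeomorphism it is topologically conjugate to the homeomorphism $\rho$ of Akin, Glasner, and Weiss and so satisfies the lifting property. The goal is then to construct a second separable model of $T$ which \emph{fails} the lifting property.

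To produce such a model, fix a non-principal ultrafilter $\mathcal{U}$ on $\mathbb{N}$ and form the ultrapower $(C(2^\mathbb{N}), \sigma)^{\mathcal{U}}$. By \L o\'s's theorem this ultrapower is elementarily equivalent to $(C(2^\mathbb{N}), \sigma)$, hence is a model of $T$. By Theorem \ref{thm:NonaxiomatizableLifting} it does not have the lifting property, and therefore by Theorem \ref{thm:OmittingTypes} it fails to omit one of the types $\Sigma_{n,m}$; that is, some tuple $\bar{a}$ in the ultrapower realizes $\Sigma_{n,m}$. Applying the downward L\"owenheim--Skolem theorem for continuous logic, I would choose a separable elementary substructure $(A_0, \sigma_0) \preceq (C(2^\mathbb{N}), \sigma)^{\mathcal{U}}$ with $\bar{a} \in A_0$. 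Because $\bar{a}$ continues to realize $\Sigma_{n,m}$ inside the elementary substructure, $(A_0, \sigma_0)$ does not omit $\Sigma_{n,m}$, so by Theorem \ref{thm:OmittingTypes} it fails the lifting property. Moreover $(A_0, \sigma_0) \equiv (C(2^\mathbb{N}), \sigma)$, so $(A_0, \sigma_0) \models T$, and it is separable by construction; being a separable model of $T_{Cantor-A}$ it has the form $(C(2^\mathbb{N}), \tau)$ for some homeomorphism $\tau$.

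Finally I would conclude non-categoricity. Both $(C(2^\mathbb{N}), \sigma)$ and $(A_0, \sigma_0)$ are separable models of $T$, but the former satisfies the lifting property and the latter does not. Since by Theorem \ref{thm:OmittingTypes} the lifting property is equivalent to omitting the fixed family of types $\{ \Sigma_{n,m} \}$, it is preserved under isomorphism, and hence these two models cannot be isomorphic. Thus $T$ has at least two non-isomorphic separable models and is therefore not separably categorical.

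The step I expect to be the crux is ensuring that the separable elementary submodel genuinely inherits the failure of the lifting property. This is precisely where Theorem \ref{thm:OmittingTypes} does the essential work: the failure of the lifting property is witnessed not by some infinitary or metric feature that might dissolve upon passing to a substructure, but by the realization of a single type $\Sigma_{n,m}$ on a finite tuple, and realization of a type by a fixed tuple is automatically preserved in any elementary substructure containing that tuple. Without the reformulation of the lifting property as an omitting-types condition, it would not be clear that a separable elementary submodel of the ultrapower retains the failure, and the argument would stall.
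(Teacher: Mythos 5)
Your proposal is correct and follows essentially the same route as the paper: take the ultrapower from Theorem \ref{thm:NonaxiomatizableLifting}, extract a tuple realizing one of the types $\Sigma_{n,m}$ from Theorem \ref{thm:OmittingTypes}, and pass to a separable elementary substructure containing that tuple, which then realizes a type that the generic model omits and so cannot be isomorphic to it. The paper's version is just a terser statement of this same argument.
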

\begin{proof}
By Theorem \ref{thm:NonaxiomatizableLifting} there are models of $T$ that do not omit the types from Theorem \ref{thm:OmittingTypes}.  If $\mathcal{M}$ is such a model and $\overline{a} \in \mathcal{M}$ realizes one of the types from Theorem \ref{thm:OmittingTypes} then any separable elementary substructure of $\mathcal{M}$ containing $\overline{a}$ will be a separable model of $T$ that is not isomorphic to $(C(2^{\mathbb{N}}), \sigma)$.
\end{proof}

We note that in Corollary \ref{cor:NotSepCategoricalLifting} the separable models obtained will be isomorphic to ones of the form $C(2^\mathbb{N}, \tau)$ for some $\tau$, so Corollary \ref{cor:NotSepCategoricalLifting} produces homeomorphisms of the Cantor set that are quite similar to the generic homeomorphism, but are not topologically conjugate to it.

\begin{cor}\label{thm:Prime}
If $\sigma$ denotes the generic homeomorphism of the Cantor set $2^\mathbb{N}$, then $(C(2^\mathbb{N}), \sigma)$ is the prime model of its theory.
\end{cor}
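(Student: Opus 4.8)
The goal is to show that $(C(2^\mathbb{N}), \sigma)$ is the prime model of $T = \op{Th}((C(2^\mathbb{N}), \sigma))$, where $\sigma$ is the generic homeomorphism. Recall that in continuous logic a countable model is \emph{prime} precisely when it is atomic, i.e.\ every tuple from the model realizes a principal (isolated) type over the empty set; equivalently, the prime model is the one that elementarily embeds into every other model of $T$. My plan is to establish primeness by exhibiting the generic homeomorphism as the unique \emph{atomic} separable model, leveraging the omitting-types characterization of the lifting property from Theorem~\ref{thm:OmittingTypes} together with the fact (from Corollary~\ref{cor:NotSepCategoricalLifting} and the surrounding discussion) that separable models of $T$ come from Cantor systems $(C(2^\mathbb{N}), \tau)$.

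First I would set up the standard continuous-logic criterion: a separable model $\mathcal{M} \models T$ is prime if and only if it is atomic, and if and only if it omits every non-principal type that is consistent with $T$. The key leverage is that, by Theorem~\ref{thm:OmittingTypes}, satisfying the lifting property is \emph{equivalent} to omitting the countable family of types $\{\Sigma_{n,m}\}$. Since $\sigma$ is the generic homeomorphism, $(C(2^\mathbb{N}), \sigma)$ has the lifting property and hence omits all of these $\Sigma_{n,m}$. The crucial point I would then argue is that these are (up to the usual type-theoretic bookkeeping) exactly the non-principal types over the empty set that are consistent with $T$: any separable model realizing a non-principal type must fail the lifting property, by the Akin--Glasner--Weiss characterization that the lifting property pins down the conjugacy class up to topological conjugacy. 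Thus a separable model of $T$ is atomic if and only if it has the lifting property, and by the AGW theorem the only such separable Cantor system (up to conjugacy, hence up to isomorphism of the associated structure) is the generic one itself.

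Concretely, I would run the argument in two directions. For existence/uniqueness of the atomic model: any separable $\mathcal{N} \models T$ that omits all the $\Sigma_{n,m}$ has the lifting property (Theorem~\ref{thm:OmittingTypes}), so by AGW its underlying homeomorphism is topologically conjugate to $\sigma$, whence $\mathcal{N} \cong (C(2^\mathbb{N}), \sigma)$ via the induced C*-algebra isomorphism (using the Convention identifying conjugacy of homeomorphisms with isomorphism of the structures). Conversely, $(C(2^\mathbb{N}), \sigma)$ itself omits all the $\Sigma_{n,m}$. So $(C(2^\mathbb{N}), \sigma)$ is the unique separable model omitting every $\Sigma_{n,m}$. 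To conclude primeness, I would invoke the omitting-types theorem for continuous logic: the prime (atomic) model is the separable model that omits precisely the non-isolated types, and I must check that the $\Sigma_{n,m}$ account for all non-principal complete types realized in separable models of $T$ — equivalently, that every complete type \emph{not} extending some $\Sigma_{n,m}$ is principal.

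The main obstacle is precisely this last verification: showing that the $\Sigma_{n,m}$ capture \emph{all} the obstructions to atomicity, not merely the lifting-property obstruction. A priori, $T$ could have other non-principal types unrelated to lifting, and a separable model could be non-isomorphic to $(C(2^\mathbb{N}), \sigma)$ for reasons invisible to the $\Sigma_{n,m}$. The resolution I would pursue rests on the strength of the AGW characterization: because the lifting property determines the homeomorphism \emph{up to topological conjugacy}, and topologically conjugate Cantor systems yield \emph{isomorphic} (hence elementarily equivalent and in fact isomorphic) structures, there can be at most one separable model with the lifting property, and it must therefore be the unique atomic model. Any separable model realizing a non-principal type is non-isomorphic to this one (as in the proof of Corollary~\ref{cor:NotSepCategoricalLifting}), hence fails the lifting property, hence realizes some $\Sigma_{n,m}$. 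This forces the correspondence between non-principal types and the $\Sigma_{n,m}$, closing the argument and yielding that $(C(2^\mathbb{N}), \sigma)$, being the unique atomic separable model, is prime.
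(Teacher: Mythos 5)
Your overall frame (prime $=$ atomic for a separable model of a complete theory, with the types $\Sigma_{n,m}$ from Theorem \ref{thm:OmittingTypes} as the key tool) matches the paper's, and you correctly identify the crux: one must show that the $\Sigma_{n,m}$ account for \emph{all} obstructions to atomicity. But your proposed resolution of that crux is circular. You argue that ``any separable model realizing a non-principal type is non-isomorphic to $(C(2^\mathbb{N}),\sigma)$, hence fails the lifting property, hence realizes some $\Sigma_{n,m}$.'' The first link in that chain is precisely the assertion that $(C(2^\mathbb{N}),\sigma)$ omits every non-principal type, i.e.\ is atomic --- the statement being proved. Corollary \ref{cor:NotSepCategoricalLifting} does not supply this link: its argument applies only to models realizing one of the $\Sigma_{n,m}$ themselves, not to models realizing an arbitrary non-principal type. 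Likewise, ``unique separable model omitting the $\Sigma_{n,m}$'' does not by itself yield ``atomic''; since $T$ is not separably categorical, being the unique omitting model is strictly weaker than atomicity unless a further argument is given.

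The argument that actually closes the gap is: if $\overline{a}\in(C(2^\mathbb{N}),\sigma)$ realized a non-principal type $p$, one would \emph{jointly} omit $p$ together with all of the $\Sigma_{n,m}$ in some separable $N\models T$; then $N$ would have the lifting property, hence $N\cong(C(2^\mathbb{N}),\sigma)$ by Akin--Glasner--Weiss, contradicting that $N$ omits $p$. The subtlety --- flagged in the remark following the paper's proof --- is that simultaneous omission of countably many types is not automatic in continuous logic. The paper sidesteps this by noting that the formulas $\psi_{n,m,k}$ mention only projections, passing to the discrete structure $(\op{CL}(2^{\mathbb{N}}),\sigma_B)$ where the classical Omitting Types Theorem does give joint omission, proving atomicity there, and then transferring primeness back to $(C(2^\mathbb{N}),\sigma)$ by upgrading the projection-elementary embedding to a fully elementary one (first on linear combinations of projections, then on all elements by density and continuity of formulas). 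Your proposal is missing both the joint-omission issue and this transfer step, and as written the argument does not go through.
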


\begin{proof}
Let $M = (\op{CL}(2^{\mathbb{N}}, \sigma_B))$, where $\sigma_B$ is the automorphism of the clopen algebra of the Cantor set induced by the homeomorphism $\sigma$.  Notice that each formula $\psi_{n,m,k}$ from the proof of Theorem \ref{thm:OmittingTypes} refers only to projections, and so it has a corresponding formula in the discrete language of Boolean algebras.  We thus have that $M$ is the unique countable model of its theory omitting the types (in discrete logic) corresponding to the ones from Theorem \ref{thm:OmittingTypes}.  If a finite tuple $\overline{a} \in M$ satisfies a non-principal type in $M$ then by the Omitting Types Theorem for discrete logic we could build a countable structure $N$ such that $N \equiv M$ and in $N$ both $\op{tp}(\overline{a})$ and all of the types from Theorem \ref{thm:OmittingTypes} are omitted.  Since the types from Theorem \ref{thm:OmittingTypes} are omitted in $N$ and $N$ is countable we then have $N \cong M$, which is a contradiction since $N$ omits $\op{tp}(\overline{a})$ and $M$ realizes $\op{tp}(\overline{a})$.  Thus $M$ is the atomic, and hence prime, model of its theory.

It remains to show that $(CL(2^{\mathbb{N}}), \sigma_B)$ being prime implies that $(C(2^{\mathbb{N}}), \sigma)$ is prime.  Via Gel'fand duality we immediately obtain that $(C(X),\sigma)$ embeds into every model of its theory via an embedding that is elementary on projections, so what remains is to show that such an embedding is elementary.

Suppose $(C(2^{\mathbb{N}}),\tau) \equiv (C(2^{\mathbb{N}})\sigma)$, and let $h: (C(2^{\mathbb{N}}),\sigma) \rightarrow (C(2^{\mathbb{N}}),\tau)$ be an embedding that is elementary on projections.  Let $M=(C(2^{\mathbb{N}}),\sigma)$ and $N=(C(2^{\mathbb{N}}),\tau)$.  We first show that $h$ is elementary on linear combinations of projections.  Let $\bar{a} \in M$ be a tuple of linear combinations of projections, and let $\varphi$ be a formula.  Then there is a formula $\psi$ such that $\psi$ accepts as arguments the tuple of projections making up $\bar{a}$, call these projections $\bar{b}$, and $\varphi( \bar{a} )$ is equivalent to $\psi( \bar{b} )$.  Then we have that $\varphi^M(\bar{a}) = \psi^M(\bar{b}) = \psi^N(h(\bar{b})) = \varphi^N(h(\bar{a}))$, so h is elementary on linear combinations of projections.

We now use the fact that every element of $C(X)$ is the limit of linear combinations of projections.  Let $\bar{a}$ be a tuple of elements of $M$ and let $\varphi$ be a formula.  Then there exists a sequence $(\bar{b}_n)_{n \in \mathbb{N}}$ such that each $\bar{b}_n$ is a tuple of linear combinations of projections and $\bar{a}$ is the limit of $(\bar{b}_n)_{n \in \mathbb{N}}$.  Then by the continuity of $\varphi$ and $h$, $\varphi^M(\bar{a}) = \lim_n\varphi^M(\bar{b}_n) = \lim_n\varphi^N(h(\bar{b}_n)) = \varphi^N(h(\bar{a}))$.  Thus, $h$ is elementary, as desired.
\end{proof}

\begin{rem}
The proof of Corollary \ref{thm:Prime} relies on being able to simultaneously omit several types, each of which is known to be omissible individually.  In discrete first-order logic the Omitting Types Theorem guarantees that this is possible, but in continuous logic joint omission of types is a significantly more subtle matter (see \cite{Farah2018}).  It is for this reason that we have resorted to using the discrete model theory of Boolean algebras in the proof of Corollary \ref{thm:Prime}.
\end{rem}

\section{A Fra\"iss\'e construction}
If $\sigma$ is a homeomorphism of the Cantor set that has the lifting property, then $\sigma$ is \emph{generic}, in the sense that the conjugacy class of $\sigma$ is comeagre in the group $\mathcal{H}(2^\mathbb{N})$ of homeomorphisms of the Cantor set.  In this section we will be discussing another notion of genericity, so to avoid confusion, in this section we will refer to homeomorphisms with the lifting property as \emph{special}.

In model theory the Fra\"iss\'e limit of a Fra\"iss\'e class is also often referred to as a \emph{generic} structure.  The two usages are related, as Kabluchko and Tent have shown that the Fra\"iss\'e limit of a Fra\"iss\'e class $\mathcal{C}$ (of discrete structures) is the unique countable structure whose isomorphism class is comeagre in the space of structures with age contained in $\mathcal{C}$ \cite{KabluchkoTent}.  

Fra\"iss\'e constructions have been adapted to continuous logic in \cite{SchoretsanitisFraisse} and \cite{BenYaacov2013}, and specifically for C*-algebras in \cite{EagleFraisse}.  In particular, it is shown in \cite{EagleFraisse} that the C*-algebra $C(2^{\mathbb{N}})$ is the Fra\"iss\'e limit of the class of finite-dimensional commutative C*-algebras.  Curiously, we can use this fact to construct a homeomorphism of the Cantor set which arises as a Fra\"iss\'e limit, but which is not special, thus showing that these two possible meanings of the term ``generic homeomorphism" are not the same.  This is of particular interest because the first construction of the special homeomorphism (in \cite{KechrisRosendal}) also involved a Fra\"iss\'e construction (though that construction is more complicated than the one described here).

For the rest of this section, let $\mathcal{K}$ denote the class of structures of the form $(C(X), \tau)$ where $C(X)$ is finite-dimensional and $\tau$ is an automorphism of $C(X)$.

\begin{thm}
The class $\mathcal{K}$ is a Fra\"iss\'e class.  The Fra\"iss\'e limit of $\mathcal{K}$ is of the form $(C(2^\mathbb{N}), \sigma)$, where $\sigma$ corresponds to a homeomorphism of $2^\mathbb{N}$.
\end{thm}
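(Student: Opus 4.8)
The plan is to verify the three Fra\"iss\'e conditions---the hereditary property (HP), the joint embedding property (JEP), and the amalgamation property (AP)---for the class $\mathcal{K}$, treating morphisms as embeddings of $\mathcal{L}$-structures (i.e.\ unital $*$-homomorphisms that intertwine the automorphisms), and then to identify the limit. For the structural conditions it is cleanest to dualize via Gel'fand and Stone duality: an object $(C(X),\tau)$ with $C(X)$ finite-dimensional corresponds to a finite discrete set $X$ together with a permutation $S\colon X\to X$, and an $\mathcal{L}$-embedding $(C(Y),\tau_Y)\hookrightarrow(C(X),\tau_X)$ dualizes to a surjection $X\twoheadrightarrow Y$ intertwining the permutations. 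So $\mathcal{K}$ is equivalent to the category of finite sets equipped with a permutation, with equivariant surjections as morphisms. The HP is immediate, since a quotient of a finite permutation by an invariant partition is again a finite permutation. For JEP, given $(X_1,S_1)$ and $(X_2,S_2)$ one embeds both into the disjoint union $(X_1\sqcup X_2, S_1\sqcup S_2)$ --- dually, $C(X_1)\oplus C(X_2)$ with the product automorphism maps onto each factor.

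The main work is the AP. Dually I must show: given equivariant surjections $X_1\twoheadrightarrow Y$ and $X_2\twoheadrightarrow Y$ of finite permutations, there is a finite permutation $Z$ with equivariant surjections onto $X_1$ and $X_2$ commuting over $Y$. The natural candidate is the fibre product $Z=X_1\times_Y X_2=\{(x_1,x_2):x_1,x_2\text{ map to the same point of }Y\}$ with the diagonal permutation $(S_1\times S_2)|_Z$; since the maps to $Y$ are equivariant, $Z$ is invariant, and the coordinate projections are equivariant. The point requiring care is that these projections must be \emph{surjective} so that they are genuine morphisms of $\mathcal{K}$ (dually, injective on the C*-algebra side); surjectivity of each projection $Z\to X_i$ follows from surjectivity of the other leg $X_{3-i}\twoheadrightarrow Y$, by a short fibrewise counting/lifting argument. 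This fibre-product amalgamation is exactly where the proof's content lies, and it is the step I expect to be the main obstacle, both because one must check equivariance and surjectivity simultaneously and because the C*-algebraic translation (the fibre product dualizes to a pushout of the finite-dimensional commutative C*-algebras $C(X_1)\oplus_{C(Y)}C(X_2)$, with the amalgamated automorphism) must be confirmed to land inside $\mathcal{K}$.

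Having established HP, JEP, and AP, the class $\mathcal{K}$ is a Fra\"iss\'e class and its Fra\"iss\'e limit exists and is the unique countable structure that is $\mathcal{K}$-universal and $\mathcal{K}$-homogeneous. To identify it, I would invoke the result of \cite{EagleFraisse} that $C(2^{\mathbb{N}})$ is the Fra\"iss\'e limit of the finite-dimensional commutative C*-algebras: the forgetful functor from $\mathcal{K}$ to that class sends the limit of $\mathcal{K}$ to a structure with the same universality and homogeneity properties in the reduct, so by uniqueness the underlying C*-algebra of the limit is $C(2^{\mathbb{N}})$. Consequently the limit has the form $(C(2^{\mathbb{N}}),\sigma)$ for some automorphism $\sigma$, and since $\sigma$ is an automorphism of $C(2^{\mathbb{N}})$ it is induced by a homeomorphism of $2^{\mathbb{N}}$ via the Convention fixed in Section~1. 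This $\sigma$ is the desired homeomorphism.
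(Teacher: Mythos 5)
Your joint embedding step is wrong as stated, and the error is a reversal of the arrows in the duality you yourself set up. An embedding $(C(Y),\tau_Y)\hookrightarrow(C(X),\tau_X)$ is an \emph{injective} unital $*$-homomorphism, which dualizes to a \emph{surjection} $X\twoheadrightarrow Y$; so to joint-embed $(C(X_1),S_1)$ and $(C(X_2),S_2)$ you need a finite permutation $(Z,\pi)$ with equivariant surjections $Z\twoheadrightarrow X_1$ and $Z\twoheadrightarrow X_2$. The disjoint union $(X_1\sqcup X_2,S_1\sqcup S_2)$ supplies injections $X_i\hookrightarrow X_1\sqcup X_2$, and your dual description confirms the problem: the maps $C(X_1)\oplus C(X_2)\to C(X_i)$ are the quotient (coordinate) projections, which are surjective and non-injective, hence exhibit $C(X_i)$ as a quotient of the direct sum, not as a unital subalgebra. (There is no unital embedding $C(X_1)\hookrightarrow C(X_1)\oplus C(X_2)$ at all, since $f\mapsto(f,0)$ is not unital.) The fix is the one the paper uses: take $Z=X_1\times X_2$ with $\pi=S_1\times S_2$ and the coordinate surjections, i.e.\ dually $C(X_1)\otimes C(X_2)$ with $f\mapsto f\otimes 1$. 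Note that this is exactly your own fibre product taken over the one-point system, so the repair is already implicit in your amalgamation argument; you just need to say it.

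Your amalgamation step, by contrast, is correct and is a genuinely different route from the paper's. The paper proves AP by decomposing the base permutation into cycles, observing that each cycle of $\tau$ and $\pi$ lying over a cycle of $\sigma$ has length divisible by that of $\sigma$, and then hand-building $Z$ as $n$ disjoint cycles of length $\operatorname{lcm}$ of all the cycle lengths, with maps chosen to wrap each cycle around its target the right number of times and to make the square commute. Your equivariant fibre product $Z=X_1\times_Y X_2$ with the restricted product permutation does all of this in one stroke: invariance of $Z$ follows from equivariance of the two legs, surjectivity of each projection follows from surjectivity of the opposite leg exactly as you indicate, commutativity over $Y$ holds by definition of the fibre product, and $Z$ is nonempty because $Y$ is. This avoids the paper's case analysis entirely and simultaneously yields JEP (amalgamate over $C(\mathrm{pt})=\mathbb{C}$). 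Your identification of the limit via the reduct to the class of finite-dimensional commutative C*-algebras and the uniqueness of the Fra\"iss\'e limit $C(2^{\mathbb{N}})$ is the same argument the paper gives.
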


\begin{proof}
It is clear that $\mathcal{K}$ satisfies the hereditary property.

We now show that $\mathcal{K}$ has the joint embedding property.  By Gel'fand duality, every finite dimensional commutative $C^*$-algebra may be thought of as the functions from some finite discrete set to the complex numbers, and every automorphism corresponds to a permutation of this finite set.  Moreover, for finite discrete sets $X$ and $Y$, every embedding from $C(X)$ to $C(Y)$ arises from a surjection from $Y$ onto $X$.  Thus, to show that $\mathcal{K}$ has the joint embedding property, it is equivalent to show that for finite sets $X$ and $Y$ and permutations $\sigma : X \rightarrow X$ and $\tau : Y \rightarrow Y$, there is a finite set $Z$ and permutation $\pi : Z \rightarrow Z$ along with surjections $f: Z \rightarrow X$ and $g : Z \rightarrow Y$ such that $f \circ \pi = \sigma \circ f$ and $g \circ \pi = \tau \circ g$.

Let $X = \{ 1,\ldots,n \}$ and let $Y = \{ 1,\ldots,m \}$.  Let $Z = X \times Y$.  Define $\pi : Z \rightarrow Z$ by $\pi (j,k) = (\sigma(j), \tau(k))$, define $f: Z \rightarrow X$ by $f(j,k) = j$, and define $g: Z \rightarrow Y$ by $g(j,k) = k$.  Then it is easy to check these functions have the desired properties.

We now show $\mathcal{K}$ has the amalgamation property.  Just as above, it is easiest to do this in the context of permutations of finite discrete sets.  Suppose we have finite discrete sets with permutations $(W,\sigma)$, $(X,\tau)$, and $(Y,\pi)$, and surjections $f: X \rightarrow W$ and $g: Y \rightarrow W$ such that $f \circ \tau = \sigma \circ f$ and $g \circ \pi = \sigma \circ g$.  We must find $Z$ and $\rho: Z \rightarrow Z$ and surjections $h: Z \rightarrow X$ and $i: Z \rightarrow Y$ such that $h \circ \rho = \tau \circ h$, $i \circ \rho = \pi \circ i$, and $f \circ h = g \circ i$.

We first consider the case where $\sigma$ is a cycle.  Note that here we consider any point left unchanged by the permutation to be a cycle of length 1, so when we say $\sigma$ is a cycle we mean that $\sigma$ cycles through all the elements of $W$.  Then both $\tau$ and $\pi$ are composed of disjoint cycles, the length of each such cycle divisible by the length of $\sigma$ so that a projection onto $(W,\sigma)$ is possible.  This is because of the fact that in order for the condition $f \circ \tau = \sigma \circ f$ to be satisfied, we must have that $f$ projects each cycle $c$ in $\tau$ onto $\sigma$ in such a way that $c$ ``wraps around" $\sigma$ the appropriate number of times, and similarly for $\pi$ and $g$.

Let $n$ denote the maximum of the number of distinct cycles in $\tau$ and the number of distinct cycles in $\pi$.  Let $m$ denote the lowest common multiple of the length of all the cycles in $\tau$ and $\pi$.  We let $Z$ consist of $nm$ points and $\rho$ be $n$ disjoint cycles of length $m$.  Then for each cycle $c$ in $\tau$ we may project a cycle $d$ from $\rho$ onto $c$, since the length of $c$ divides the length of $d$ so we may therefore ``wrap" $d$ around $c$ the appropriate number of times.  We therefore define $h$ to be such a surjection, so that $h \circ \rho = \tau \circ h$.  We then construct $i$ in a similar way so that $i \circ \rho = \pi \circ i$, although we must be careful to choose $i$ so that $f \circ h = g \circ i$.  We do this as follows.  For $z \in Z$, $z$ must appear in some cycle $d$ in $\rho$.  By our choice of $n$ and $m$ we may let $i$ project $d$ onto any cycle in $\pi$, however we require that $z$ is mapped to an element of $g^{-1} (f \circ h ( \{ z \} ))$.  We then have that $f \circ h = g \circ i$.

We now consider the more general case that $\sigma$ is a product of disjoint cycles.  We can then decompose $W$ into a disjoint subsets $W_1,\ldots,W_n$ such that $\sigma$ is a cycle on each of these components.  We then repeat the construction above on each of these pieces and put the resulting sets together to get $Z, \ \rho, \ h$, and $i$ as required.

As shown in \cite{EagleFraisse}, the class of reducts of members of $\mathcal{K}$ to the language of C*-algebras is a Fra\"iss\'e class with Fra\"iss\'e limit $C(2^{\mathbb{N}})$.  Thus the Fra\"iss\'e limit of $\mathcal{K}$ is of the form $(C(2^{\mathbb{N}}), \sigma)$ for some automorphism $\sigma$ of $C(2^{\mathbb{N}})$.
\end{proof}

Our next result requires the notion of a wandering point of a homeomorphism.

\begin{defn}
Let $X$ be a topological space, and let $\sigma : X \to X$ be a homeomorphism.  A point $x \in X$ is called \emph{wandering} if there is an open $U \subseteq X$ such that $x \in U$ and an $N \in \mathbb{N}$ such that for all $n > N$, $\sigma^n(U) \cap U = \emptyset$.
\end{defn}

\begin{thm}
The homeomorphism arising from the Fra\"iss\'e limit of $\mathcal{K}$ is not special.
\end{thm}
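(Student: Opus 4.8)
The plan is to separate the Fra\"iss\'e limit of $\mathcal{K}$ from every special homeomorphism by means of wandering points: I would show that the Fra\"iss\'e limit has \emph{no} wandering points, whereas every special homeomorphism \emph{does} have one. Since having the lifting property is a conjugacy invariant and the existence of a wandering point is preserved under topological conjugacy, this dichotomy immediately yields that the Fra\"iss\'e limit is not special.

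First I would show that the Fra\"iss\'e limit $(C(2^\mathbb{N}), \sigma)$ has no wandering points. Realize the limit dually as an inverse limit $2^\mathbb{N} = \varprojlim (X_k, \tau_k)$, coming from an increasing chain $F_1 \hookrightarrow F_2 \hookrightarrow \cdots$ of members of $\mathcal{K}$ with direct limit $(C(2^\mathbb{N}), \sigma)$, where $F_k = C(X_k)$, each $\tau_k$ is a permutation of the finite set $X_k$, and the projections $q_k : 2^\mathbb{N} \to X_k$ satisfy $q_k \circ \sigma = \tau_k \circ q_k$. The crucial structural point is that every $\tau_k$, being a permutation of a finite set, is a product of disjoint cycles, with no ``linear'' pieces. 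Now fix $x \in 2^\mathbb{N}$ and any clopen neighborhood $U$ of $x$. Since the cylinders $q_k^{-1}(\{q_k(x)\})$ form a neighborhood basis at $x$, choose $k$ with $q_k^{-1}(\{q_k(x)\}) \subseteq U$, and let $\ell$ be the length of the cycle of $\tau_k$ through $q_k(x)$. Then $\tau_k^{j\ell}(q_k(x)) = q_k(x)$ for all $j \geq 1$, so by equivariance $\sigma^{j\ell}(q_k^{-1}(\{q_k(x)\})) \subseteq q_k^{-1}(\{q_k(x)\}) \subseteq U$, whence $\sigma^{j\ell}(U) \cap U \neq \emptyset$ for all $j \geq 1$. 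As $j\ell \to \infty$, no $N$ can witness that $x$ is wandering, so the Fra\"iss\'e limit has no wandering points.

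Next I would show that every special homeomorphism $(X,\sigma)$ has a wandering point. Being special, $(X,\sigma)$ is conjugate to the Akin--Glasner--Weiss homeomorphism, which comes equipped with a continuous surjection $\phi : (X,\sigma) \to (W_1, R_1)$ in the sense defined above. Consider the unique middle point $(m,0) \in W_1$ together with the nonempty clopen set $U = \phi^{-1}(\{(m,0)\})$. The only $R_1$-successor of $(m,0)$ is $(r,0)$, and the only $R_1$-successor of $(r,0)$ is $(r,0)$ itself; hence for any $x \in U$ the itinerary $\phi(\sigma^j x)$ is forced to be $(m,0), (r,0), (r,0), \ldots$, so $\phi(\sigma^j x) \neq (m,0)$ for all $j \geq 1$. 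Therefore $\sigma^j(U) \cap U = \emptyset$ for every $j \geq 1$, and every point of $U$ is wandering (taking $N = 0$). The same argument works with any $(W_n,R_n)$ applied to the linear chain $M_n$ of a spiral, since its forward $R_n$-orbit must leave $M_n$ and enter the cycle $\widetilde{R}_n$, never returning to $M_n$.

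The main obstacle I anticipate is exactly the existence claim in the last step: I must ensure that a special homeomorphism \emph{genuinely} admits a continuous surjection onto $(W_1, R_1)$, rather than satisfying the lifting property vacuously for lack of any such $\phi$. This is where the Akin--Glasner--Weiss construction is essential, as their homeomorphism is built from the spiral systems $(W_n, R_n)$ and thus carries canonical coordinate surjections onto each of them (these are precisely the maps that the lifting property is designed to refine). Once this is granted, the wandering point in the special case and its absence in the Fra\"iss\'e limit complete the argument.
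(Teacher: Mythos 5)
Your proposal is correct and follows essentially the same route as the paper: the paper also separates the two homeomorphisms by showing the special one has wandering points (via the preimage of a middle point of a spiral under a surjection onto some $(W_n,R_n)$, using $n=2$ where you use $n=1$) and the Fra\"iss\'e limit has none (because, in the direct/inverse limit of finite permutation systems, every clopen set returns to itself under some power of $\sigma$). The only difference is cosmetic --- you phrase the recurrence argument dually via the inverse limit of finite sets rather than via projections in the direct limit of algebras.
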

\begin{proof}
We will refer to the homeomorphism arising from the Fra\"iss\'e construction as $\sigma$ and the special homeomorphism as $\tau$.  We show that we can distinguish between these two maps by establishing the existence of wandering points for $\tau$ and the nonexistence of wandering points for $\sigma$.

First we show that $\tau$ has wandering points.  By the lifting property, we can find a continuous surjection $\phi : (2^{\mathbb{N}}, \tau) \rightarrow (W_2, R_2)$.  Choose one finite spiral in $W_2$ and describe it as $(\{l \} \times \{0, 1 \}) \cup (\{m \} \times \{-1, 0, 1 \}) \cup (\{r \} \times \{0, 1 \})$.  Then $V = \phi^{-1} \{ (m,1) \}$ is a wandering set, since for all $n \geq 1$, $\tau ^n (V) \subseteq \phi^{-1} \{ (r, k) \}$, where $k = 0$ if $n$ is odd and $k = 1$ if $n$ is even.  In either case, $V \cap \tau^n(V) = \emptyset$, so every point in $V$ is wandering.

We now show that $\sigma$ has no wandering points.  Since the Cantor set is totally disconnected, it has a basis of clopen sets, and it is therefore enough to show there are no wandering clopen sets.  Since projections are precisely the indicator functions of clopen sets, we prove the nonexistence of wandering points by showing that for any projection $p \in C(2^{\mathbb{N}})$ there is a positive integer $n$ such that the support of $\sigma^n(p)=p$.

Let $p$ be any projection in $C(2^{\mathbb{N}})$.  $(C(2^{\mathbb{N}}), \sigma)$ is a direct limit of the elements of $\mathcal{K}$, and so $p$ is an equivalence class of vectors of the form $v \in (\mathbb{C}^m, \rho)$, for $m$ ranging over the positive integers and $\rho: \mathbb{C}^m \rightarrow \mathbb{C}^m$ an automorphism.  Choose some representative $v$ from this equivalence class.  Then $v \in (\mathbb{C}^m, \rho)$  for some $m$ and $\rho$.  The automorphism $\rho$ acts on elements of $\mathbb{C}$ by permuting the coordinates, so there exists a positive integer $n$ such that $\rho^n$ is the identity.  Thus, $\rho^n(v) = v$.  Since the automorphisms of the elements of $\mathcal{K}$ commute with the homomorphisms connecting them, every element of the equivalence class $p$ must return to itself after $n$ iterations of its automorphism.  Since this is true of every element of the equivalence class, we must have that $\sigma^n(p) = p$, and hence $\sigma$ has no wandering points.
\end{proof}
\bibliographystyle{amsalpha}
\bibliography{Cantor}
\end{document}